\documentclass[a4paper]{amsart}
\usepackage{amssymb,bm}
\usepackage{amsmath}
\usepackage{amsfonts}
\usepackage{amsthm}
\usepackage{color}
\usepackage{graphics,graphicx,url}
\usepackage{enumerate}
\usepackage{comment}
\usepackage{centernot}
\usepackage[normalem]{ulem}

\usepackage{breqn} 

\usepackage[a4paper]{geometry}
\geometry{left=2.5cm,right=2.5cm,top=2.5cm,bottom=2.5cm}

\usepackage{tikz}
\usetikzlibrary{calc,decorations.pathmorphing,patterns,arrows,decorations.markings,positioning}
\usetikzlibrary{automata,chains,fit,shapes}

\usepackage{graphicx,pdflscape,rotating,float}
\usepackage{enumitem}

\newtheorem{theorem}{Theorem}
\newtheorem{lemma}[theorem]{Lemma}
\newtheorem{conjecture}[theorem]{Conjecture}

\newtheorem{corollary}[theorem]{Corollary}
\theoremstyle{definition}
\newtheorem{remark}[theorem]{Remark}
\newtheorem{definition}[theorem]{Definition}
\newtheorem{example}[theorem]{Example}

\newcommand{\N}{\mathbb N}
\newcommand{\Z}{\mathbb Z}
\newcommand{\iec}{IEC}

\usetikzlibrary{positioning,calc}

\usepackage[numbers]{natbib}

\usepackage{freetikz}

\begin{document}

\title{Rewriting systems, plain groups, and geodetic graphs}
\thanks{Research supported by  Australian Research Council grant  DP210100271}

 \author[M. Elder]{Murray Elder}
 \address{University of Technology Sydney, Ultimo NSW 2007, Australia}  
 \email{murray.elder@uts.edu.au}

 \author[A. Piggott]{Adam Piggott}
 \address{Australian National University, Canberra ACT  0200, Australia}
\email {adam.piggott@anu.edu.au}

\date{\today}

\subjclass[2020]{05C90, 20E06, 20F65, 68Q42}

\keywords{string rewriting  \and group theory  \and geodetic graphs.}

\maketitle             
\begin{abstract}
We prove that a group is presented by finite convergent length-reducing rewriting systems where each rule has left-hand side of length 3 if and only if the group is {plain}. 
Our proof goes via a new result concerning properties of embedded circuits in geodetic graphs, which may be of independent interest in graph theory.

\end{abstract}

 \section{Introduction}

 The study of rewriting systems connects abstract algebra and theoretical computer science in deep and useful ways. A program of research initiated in the 1980s seeks to characterise algebraically the families of groups that may be presented by various families of rewriting systems (see \cite{MadlenerOttoDescriptivePower} for a broad introduction). An important part of this program is to characterise the groups that may be presented by length-reducing rewriting systems.  Early progress was swift.
 Diekert \cite{DiekertLengthReducing} (see also \cite{MadlenerOttoCommutativity}) proved that that the family of groups admitting presentation by finite convergent length-reducing rewriting systems is properly contained within the family of virtually-free groups;  Avenhaus, Madlener and Otto \cite{AvenhausMadlenerOtto} proved that the family of groups admitting presentation by finite convergent length-reducing rewriting systems in which each rule has a left-hand-side of length two is exactly the family of plain groups (a group is \emph{plain} if it isomorphic to a free product of finitely-many factors, with each factor a finite group or an infinite cyclic group); an explicit construction (described in Section \ref{sec:Rewriting}) shows that any plain group admits presentation by a finite convergent length-reducing rewriting system.
From such results the plain groups emerged as the likely family of groups presented by finite convergent length-reducing rewriting systems. In 1987, Madlener and Otto \cite{MadlenerOttoLengthReducing} summarised the state of knowledge by highlighting the following  two conjectures, the resolution of which  would ``give a complete algebraic characterisation of groups presented by length-reducing systems''.

\begin{conjecture}[Gilman \cite{Gilman}]\label{Conj:Gilman}
Let $G$ be a group.  Then $G$  admits presentation by a finite convergent length-reducing rewriting system $(\Sigma, T)$  in which the right-hand side of every rule has length  at most one
 if and only if  $G$ is plain.
\end{conjecture}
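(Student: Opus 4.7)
The plan is to prove the two directions of the conjecture separately, with the ``if'' direction being routine and the ``only if'' direction requiring the paper's new graph-theoretic result.

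For the ``if'' direction, if $G$ is plain then $G \cong C_1 * \cdots * C_n$ with each $C_i$ finite or infinite cyclic. The explicit construction referenced above produces a finite convergent length-reducing rewriting system whose rules are of two kinds: multiplication-table rules $xy \to z$ for pairs of generators in the same finite factor, and free-reduction rules $xx^{-1} \to \epsilon$ for inverse pairs. Both types have right-hand side of length at most one, and one verifies length-reduction and local confluence directly, with termination immediate from length-reduction.

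For the ``only if'' direction, suppose $(\Sigma, T)$ is such a system presenting $G$. The strategy is to consider the Cayley graph $\Gamma = \Gamma(G,\Sigma)$ and argue that the combination of length-reduction, convergence, and the monadic hypothesis on right-hand sides forces $\Gamma$ to be geodetic: two distinct geodesics between a pair of vertices would give a short embedded circuit labelled by a non-trivial word, and rewriting this word to the identity using only monadic rules places strong constraints on the rules of $T$ that one can exploit to derive a contradiction unless geodesics are unique. Once geodeticity is established, the new structural theorem about embedded circuits in geodetic graphs constrains the shape of short circuits in $\Gamma$, which in turn constrains the shape of rules in $T$. After adjustments to $T$ that do not change the presented group (such as introducing rules for shorter subwords appearing in existing left-hand sides and then removing the longer rules as redundant), each rule can be replaced by rules whose left-hand side has length at most three, at which point the main theorem announced in the abstract applies and concludes that $G$ is plain.

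The main obstacle I expect is twofold: first, establishing geodeticity of $\Gamma$ by a careful translation of the rewriting dynamics into path behaviour in $\Gamma$; and second, leveraging the new circuit theorem to bound the left-hand side length of rules. The Avenhaus--Madlener--Otto theorem for length-two rules provides the base case of the reduction, but the inductive step, whereby rules of arbitrarily long left-hand side are replaced by rules of left-hand side length at most three without enlarging the presented group, is the delicate part, and is precisely what the new geodetic-graph theorem is engineered to deliver.
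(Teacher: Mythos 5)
First, a point of order: the paper does not prove this statement. It is stated as a conjecture, attributed to Gilman, and its resolution is credited to Eisenberg and the second author in \cite{GilmansConjecture}; the paper's own contribution, Theorem~\ref{thmA:main}, treats a different special case of Conjecture~\ref{Conj:LR} (left-hand sides of length at most three, with $\Sigma=\Sigma^{-1}$), which the authors explicitly describe as not implied by the resolution of Gilman's Conjecture. So there is no in-paper proof to compare against, and your proposal must stand on its own. The ``if'' direction is fine and matches Corollary~\ref{cor:Plain}.

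The ``only if'' direction has two genuine gaps. First, geodeticity: the paper's Lemma~\ref{lem:RewritingAndGamma} derives geodeticity of $\Gamma(G,\Sigma)$ from the hypothesis $\Sigma=\Sigma^{-1}$, because only then is every geodesic edge path labelled by a word in $\Sigma^{*}$ to which the rewriting system applies. Gilman's hypothesis does not include $\Sigma=\Sigma^{-1}$, so a geodesic in the undirected Cayley graph is labelled by a word over $\Sigma\cup\Sigma^{-1}$ that need not lie in $\Sigma^{*}$, and your claim that two distinct geodesics yield two distinct irreducible words, hence a contradiction with confluence, does not go through as stated. Second, and more seriously, the proposed reduction ``each rule can be replaced by rules whose left-hand side has length at most three'' is unsupported and is in fact the entire difficulty. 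A monadic rule $(\ell,r)$ with $|\ell|$ large gives no reason for any proper factor of $\ell$ to represent a group element of small length, so there is no length-reducing rule with that factor as left-hand side available to introduce, and no mechanism by which the long rule becomes redundant. Moreover the logic is circular: Theorem~\ref{ThmB:Shortcircuits} takes as a hypothesis that isometrically embedded circuits have length at most five, and in the paper that hypothesis is obtained \emph{from} the bound on left-hand-side lengths (Lemma~\ref{lem:RewritingAndGamma}), not the other way around; you cannot invoke the circuit theorem to manufacture the rule-length bound that the circuit theorem presupposes. Indeed, if such a reduction to left-hand sides of length at most three existed in general, all of Conjecture~\ref{Conj:LR} would follow from Theorem~\ref{thmA:main}, which the authors make clear is not the case.
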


\begin{conjecture}[Madlener and Otto \cite{MadlenerOttoLengthReducing}]\label{Conj:LR}
Let $G$ be a group.  Then $G$  admits presentation by a finite convergent length-reducing rewriting system $(\Sigma, T)$  if and only if  $G$ is plain.
\end{conjecture}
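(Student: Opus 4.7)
The backward implication is immediate from the explicit construction described in Section~\ref{sec:Rewriting}, so I focus on the forward direction: if $G$ is presented by a finite convergent length-reducing rewriting system $(\Sigma, T)$, then $G$ is plain. My plan is to pass from the rewriting system to a graph-theoretic problem about the Cayley graph $\Gamma = \mathrm{Cay}(G, \Sigma)$ and then to exploit the rigidity that length-reduction imposes. By Diekert's theorem, $G$ is already known to be virtually free, which provides strong global structure. Because $T$ is convergent and length-reducing, every element of $G$ has a unique irreducible representative and every irreducible word labels a geodesic in $\Gamma$; a standard ambiguity argument on the confluence of $T$ should then show that $\Gamma$ is \emph{geodetic}, i.e.\ any two vertices are joined by a unique shortest path. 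Each rule $\ell \to r$ of $T$ translates into an embedded circuit of length at most $|\ell| + |r|$ in $\Gamma$, and finiteness of $T$ yields a uniform bound on the lengths of the ``minimal'' circuits one needs to control.

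The strategy is then induction on the maximum length $k$ of a left-hand side of $T$. The base case $k \le 2$ is the theorem of Avenhaus, Madlener and Otto. For the inductive step, I would use a structural result on embedded circuits of bounded length in vertex-transitive geodetic graphs (the new graph-theoretic input alluded to in the abstract) to locate a short separating circuit in $\Gamma$. From such a circuit one should be able to read off either a finite or infinite-cyclic free factor of $G$, or else a Bass--Serre splitting of $G$ as a free product amalgamated over a finite subgroup; combined with the virtually free hypothesis and iterated, this should eventually produce the desired plain decomposition. The other half of the inductive step is to show that after peeling off such a factor the quotient still admits a finite convergent length-reducing presentation, with strictly shorter left-hand sides.

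The main obstacle is the graph-theoretic input. For short left-hand sides, in particular $k = 3$, one can enumerate the possible local circuit configurations in a geodetic graph essentially by hand, which is why that case is accessible and presumably forms the technical heart of the paper. For unbounded $k$, however, embedded circuits in a geodetic graph can be arbitrarily long and intricate, and it is not at all clear that every such configuration must produce a short separating circuit of the kind required; ruling out pathological ``irreducible'' circuits that do not decompose into the plain pattern is the delicate point. Translating whatever structural lemma one does prove back into a reduction of the rewriting system, so that the induction actually closes, is the second serious difficulty, and together these two issues are essentially the remaining content of Conjecture~\ref{Conj:LR}.
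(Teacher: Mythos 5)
The statement you are trying to prove is an open conjecture, and the paper does not prove it: the authors state explicitly that their Theorem~\ref{thmA:main} ``falls well short of resolving Conjecture~\ref{Conj:LR}''. What the paper actually proves is the special case in which $\Sigma=\Sigma^{-1}$ and every left-hand side has length at most three. Your proposal is, to its credit, candid about this --- the two ``difficulties'' you name in your final paragraph are precisely the open content of the conjecture --- but that means what you have written is a research programme, not a proof, and it cannot be accepted as a proof of the statement.

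Beyond that global point, two concrete gaps are worth flagging. First, geodeticity of the undirected Cayley graph is not a consequence of convergence and length-reduction alone: the paper's Lemma~\ref{lem:RewritingAndGamma} needs the hypothesis $\Sigma=\Sigma^{-1}$, because otherwise a geodesic in the undirected Cayley graph may be labelled by a word over $\Sigma\cup\Sigma^{-1}$ that does not lie in $\Sigma^{*}$, and the unique-normal-form argument says nothing about it. That symmetry assumption is an extra restriction in Theorem~\ref{thmA:main} which is absent from the conjecture, so even your ``standard ambiguity argument'' step is not available in the generality you claim. Second, your proposed induction is not how the paper argues and has an unjustified inductive step. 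The paper's route in the accessible case is: left-hand sides of length $\le 3$ force isometrically embedded circuits of length $\le 5$ (Lemma~\ref{lem:RewritingAndGamma}), Theorem~\ref{ThmB:Shortcircuits} then forces \emph{all} embedded circuits to have diameter $\le 2$, hence all blocks have diameter $\le 2$, and the block-cut tree gives a geometric action on a tree with finite vertex stabilisers and trivial edge stabilisers, whence $G$ is plain (Theorem~\ref{thm:PlainAndTwoBlocks}). There is no ``peel off a free factor and pass to a quotient with shorter left-hand sides'' step anywhere, and your assertion that such a quotient again admits a finite convergent length-reducing presentation with strictly shorter left-hand sides is unsupported; establishing it would likely be as hard as the conjecture itself. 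As it stands, your argument proves nothing beyond the known special cases it cites.
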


Although a special case of Conjecture \ref{Conj:LR}, Gilman's Conjecture was important enough to consider separately because it seemed more tractable and its resolution may provide clues to the more general problem. The recent positive solution to Gilman's Conjecture by Eisenberg and the second author  \cite{GilmansConjecture}  motivates the present work. Our main result proves Conjecture \ref{Conj:LR} in a special case not implied by \cite{GilmansConjecture}.

\begin{theorem}\label{thmA:main}
Let $G$ be a group. Then $G$  admits presentation by a finite convergent length-reducing rewriting system $(\Sigma, T)$ such that 
$\Sigma=\Sigma^{-1}$ and 
 the left-hand side of every rule has length at most three 
 if and only if  $G$ is plain.
\end{theorem}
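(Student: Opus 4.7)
The reverse implication follows from the construction described in Section~\ref{sec:Rewriting}: every plain group admits a finite convergent length-reducing rewriting system whose rules all have left-hand sides of length two, and such a system trivially satisfies the hypotheses. The substance lies in the forward direction, which I now sketch: assuming $G$ is presented by such a system $(\Sigma, T)$ with $\Sigma = \Sigma^{-1}$ and $|u| \leq 3$ for every rule $(u,v) \in T$, I must show $G$ is plain.

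My plan is to work in the Cayley graph $\Gamma = \Gamma(G, \Sigma)$. Since $(\Sigma, T)$ is convergent and length-reducing, each element of $G$ has a unique irreducible word, and this word is forced to be a shortest representative of the element; by a standard uniqueness argument the irreducible word is therefore the unique geodesic for that element. Consequently $\Gamma$ is a geodetic graph: any two vertices are joined by a unique geodesic path. The length-$3$ bound on rules then places sharp restrictions on short embedded circuits: every closed walk of length at most six arises from rules in $T$ in a highly constrained way, which in turn restricts how distinct short circuits may share vertices and edges in $\Gamma$.

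The crux of the argument---and the step I expect to be the main obstacle---is the graph-theoretic result announced in the abstract. I would aim to show that a geodetic graph carrying the circuit structure inherited from a length-$3$ length-reducing rewriting system must decompose as a tree of finite subgraphs glued at cut vertices, with biconnected components of uniformly bounded size. The delicate point is to use the uniqueness of geodesics to rule out the configurations in which short embedded circuits could combine to produce an unbounded biconnected component; this is where the length-$3$ hypothesis enters essentially, since permitting even one longer rule would allow the kind of ``hidden'' interactions between circuits that Conjecture~\ref{Conj:LR} seems to require new ideas to handle.

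Finally, the algebraic conclusion should follow by standard means. The $G$-action on $\Gamma$ permutes biconnected components and cut vertices, and so descends to an action on the associated block-cut tree with finite vertex stabilisers. Bass--Serre theory applied to this action then exhibits $G$ as a free product of finite groups and infinite cyclic groups, that is, as a plain group.
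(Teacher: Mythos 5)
Your outline follows the same architecture as the paper's proof: the reverse direction via the free-product construction (Corollary~\ref{cor:Plain}), and the forward direction via geodecity of the Cayley graph, bounded embedded circuits, the block-cut tree, and Bass--Serre theory (packaged in the paper as Theorem~\ref{thm:PlainAndTwoBlocks}). But there is a genuine gap at exactly the point you flag as ``the main obstacle'': you state that you \emph{would aim to show} that short embedded circuits cannot combine into a biconnected component of unbounded diameter, and you give no argument for this. That claim is the entire technical content of the paper (Theorem~\ref{ThmB:Shortcircuits}), and it does not follow by ``standard means'' from geodecity plus the length-$3$ bound. Two things are missing. First, the precise bridge from the rewriting system to the graph: Lemma~\ref{lem:RewritingAndGamma} shows that an isometrically embedded circuit of length $m>2$ must have odd length $2n+1$ and correspond exactly to a rule $(\ell,r)\in T$ with $|\ell|=n+1$, so the length-$3$ hypothesis bounds \emph{isometrically} embedded circuits by $5$ --- it says nothing directly about general embedded circuits or closed walks of length six, which is what your sketch gestures at.

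Second, and more seriously, passing from ``isometrically embedded circuits have length at most $5$'' to ``all embedded circuits have diameter at most $2$'' requires the delicate case analysis of Section~\ref{sec:GraphTheorem}: one takes a minimal-length embedded circuit $\rho$ of diameter exceeding two, shows it contains a geodesic subpath of length three (Lemma~\ref{lem:level3}), then uses the $2$-broomlike property (Lemmas~\ref{lem:EP} and~\ref{lem:IECbroomlike}) to force $\rho$ into a single rigid shape of length $8$ (Lemma~\ref{lem:FormOfRho}), and finally derives a contradiction by applying that shape lemma to the reversal of $\rho$ and invoking Stemple's theorem on $4$-circuits in geodetic graphs (Lemma~\ref{lem:Stemple4}). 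None of this is present or even hinted at in your proposal; without it, the forward direction is a plan rather than a proof. Your concluding Bass--Serre step is fine once bounded block diameter is established, but that hypothesis is precisely what remains unproved.
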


Our proof  is essentially graph theoretic, and exploits the fact that if $G$ and $(\Sigma, T)$ are as in the theorem, then the undirected Cayley graph $\Gamma = \Gamma(G, \Sigma)$ is geodetic.  A simple undirected graph $\Gamma$ is \emph{geodetic} if between any pair of vertices there exists a unique shortest path.  In \cite[Problem 3, p.105]{Ore}, Ore posed the problem of giving a general classification of all finite geodetic graphs, but that has proven very difficult.  Although planar geodetic graphs have been characterised \cite{StempleWatkins}, various structural aspects of geodetic graphs of diameter two and three are understood \cite{Stemple,GeodeticBlocksDiameterThree, Scapellato}, the geodetic graphs homeomorphic to complete graphs are known \cite{StempleHomeoToKn}, and a number of clever procedures have been developed for constructing new geodetic graphs from existing ones (see, for example, \cite{Frasser2016GeodeticGH}), a general classification of geodetic graphs is not close.  We prove the following, which is new and may be of independent interest simply because the task of classifying geodetic graphs has proven to be  so difficult.

\begin{theorem}
\label{ThmB:Shortcircuits}
If $\Gamma$ is 
an undirected simple geodetic graph in which isometrically embedded circuits have length at most five, 
 then 
all embedded circuits have diameter at most two. 
\end{theorem}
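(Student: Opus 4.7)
The plan is to argue by contradiction. Among all triples $(C, u, v)$ where $C$ is an embedded circuit in $\Gamma$ and $u, v \in C$ satisfy $d_\Gamma(u, v) \geq 3$, I would choose one that first minimises $d := d_\Gamma(u, v)$ and then minimises $|C|$. Let $P$ denote the unique $u$--$v$ geodesic in $\Gamma$, and let $A_1, A_2$ be the two arcs of $C$ joining $u$ and $v$. Since $\Gamma$ is geodetic, any $u$--$v$ path of length $d$ must coincide with $P$; hence each arc $A_i$ either equals $P$ vertex-wise or has length strictly greater than $d$, and in particular $|C| \geq 2d+1 \geq 7$. A basic chord reduction also applies: any chord of $C$ with an endpoint in $\{u, v\}$, or that does not separate $u$ and $v$ on $C$, would produce a smaller embedded sub-cycle still containing both $u$ and $v$, contradicting minimality of $|C|$.

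The heart of the argument is case analysis on the intersection $V(P) \cap V(C)$. In the disjoint case, where $P$ shares no internal vertex with $C$, the subgraphs $P \cup A_1$ and $P \cup A_2$ are embedded cycles of length $d + |A_i|$; since $P \neq A_i$ as paths we have $|A_i| > d$, so each such cycle is strictly shorter than $|C|$ while still containing $u$ and $v$ at $\Gamma$-distance $d$, contradicting minimality. In the non-disjoint case, the chord reduction applied to the edges $u x_1$ and $x_2 v$ of $P$ forces any shared internal $P$-vertex to be a $C$-neighbour of $u$ or $v$. A sub-case analysis on whether the at most two shared internal vertices of $P$ lie on the same arc of $C$ or on opposite arcs then allows the construction of a smaller embedded cycle, built from portions of $P$ and $C$, still containing a pair at $\Gamma$-distance at least $d$---again contradicting minimality.

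The main obstacle is the residual sub-case in which either $P$ coincides as a path with one of the arcs of $C$ (so that $P$ is itself a sub-path of $C$), or the two internal vertices of $P$ lie on opposite arcs of $C$ and induce a chord of $C$ that separates $u$ from $v$. In these configurations the immediate shortcut constructions fail to yield a smaller embedded cycle containing both $u$ and $v$. I would resolve these cases using the hypothesis on short isometric circuits: since $|C| \geq 2d + 1 \geq 7 > 5$, the cycle $C$ cannot be isometrically embedded, so there exists a pair $(s, t) \in C$ with $d_\Gamma(s, t) < d_C(s, t)$. The geodesic between such a pair, combined with the chord structure produced by $P$, should yield a smaller embedded cycle violating the conclusion, or else force the existence of an isometrically embedded cycle of length at least six---either outcome is a contradiction, completing the proof.
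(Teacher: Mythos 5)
Your reductions are sound as far as they go: minimising $d$ and then $|C|$ does force $d=3$ and $|C|\geq 7$, the chord observations are correct, and the disjoint and ``one shared internal vertex'' configurations do yield shorter circuits containing a pair at distance $3$. But note what survives your case analysis: precisely the configuration in which the geodesic $P$ is an arc of $C$ (or the chord $x_1x_2$ separates $u$ from $v$). The paper's Lemma~\ref{lem:level3} is essentially the statement that a minimal counterexample \emph{must} be in your residual sub-case, so everything you have done is preliminary; the entire content of the theorem lives in the case you defer to the final paragraph, and that paragraph is not a proof. The asserted dichotomy --- that a shortcut pair $(s,t)$ on the non-isometrically-embedded circuit $C$ ``should yield a smaller embedded cycle violating the conclusion, or else force an isometrically embedded cycle of length at least six'' --- is a restatement of the goal, not an argument. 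The obstruction is concrete: the geodesic realising $d_\Gamma(s,t)<d_C(s,t)$ may re-enter $C$, may share vertices with $P$, and the spliced cycle it produces may fail to contain any pair at distance $3$ (e.g.\ if $s$ and $t$ both lie near $v$), so none of the three failure modes you need to exclude is excluded.

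What is missing is a tool that controls \emph{where} and \emph{how} a geodesic subpath of $C$ first fails to extend to a geodesic. The paper gets this from Lemma~\ref{lem:EP} (two geodesics from a common vertex with distinct first edges and adjacent endpoints bound an isometrically embedded circuit) and its consequence Lemma~\ref{lem:IECbroomlike}: since all such circuits have length at most $5$, the graph is $2$-broomlike, so at the first failure the new geodesic backtracks at most two steps and the replacement vertex can be located relative to $C$. That is what converts ``$C$ has a shortcut somewhere'' into the rigid structure of Lemma~\ref{lem:FormOfRho} ($m=5$, prescribed levels, $d(u_3,v_1)=1$), after which the contradiction still requires applying that lemma to the reversed circuit and invoking Stemple's result (Lemma~\ref{lem:Stemple4}) that a $4$-circuit in a geodetic graph spans a complete graph. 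Without an analogue of the broomlike property, your final step cannot be carried out, so the proposal has a genuine gap at the decisive point.
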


While Theorem~\ref{thmA:main} falls well short of resolving Conjecture~\ref{Conj:LR}, and Theorem~\ref{ThmB:Shortcircuits} is an incremental  contribution to our understanding of geodetic graphs, we think our proof offers insight into the difficulties to be overcome by any argument that takes a primarily graph-theoretic approach to a significant open problem that has  defied the  efforts of many authors for more than three decades.

\section{Definitions}\label{sec:Defns}

\subsection{Rewriting systems}\label{sec:Rewriting}

A {\em rewriting system} is a pair $(\Sigma, T)$ that formalises the idea of working with products from a set of allowable symbols, using a set of simplifying rules.  The set $\Sigma$ is a nonempty set, called an {\em alphabet}; its elements are called {\em letters}.  We write $\Sigma^\ast$ for the set of all finite words, including the empty word  $\lambda$, that can be made using letters from the alphabet.  For any $w \in \Sigma^\ast$, we write $|w|$ for the {\em length} of $w$; $\lambda$ is the unique word of length 0.  The second element $T$ is a possibly empty subset of $\Sigma^*\times \Sigma^*$, called a set of {\em rewriting rules}. The set of rewriting rules determines a relation $\to$ (read ``immediately reduces to'') on the set $\Sigma^\ast$ by the following rule: $a \to b$ if $a=u\ell v$,  $b=urv$ and $(\ell, r) \in T$.  The reflexive and transitive closure of $\to$ is denoted $\overset{\ast}{\to}$ (read ``reduces to''). 
Thus  the rewriting rules specify allowable  factor replacements, and $u \overset{\ast}{\to} v$ if $v$ can be obtained from $u$ by a sequence of allowable factor replacements.  A word $u \in \Sigma^\ast$ is {\em irreducible} if no factor of $u$ is the left-hand side of any  rewriting rule, and hence $u \overset{\ast}{\to} v$ implies that $u = v$.

The reflexive, transitive and symmetric closure of $\to$ is called ``equivalence'', and denoted $\overset{\ast}{\leftrightarrow}$.  The operation of concatenation of representatives is well defined on the set of $\overset{\ast}{\leftrightarrow}$-equivalence classes, and hence defines a quotient monoid $M = M(\Sigma, T)$.  We say that $M$ is the monoid presented by $(\Sigma, T)$.  When the equivalence class of every letter (and hence also the equivalence class of every word) has an inverse, the monoid $M$ is a group and we say it is {\em the group presented by $(\Sigma, T)$}.

\begin{example}\label{eg:InfiniteCyclicGroup}
Let $\Sigma = \{a, A\}$ and let
$T = \left\{(a A, \lambda), (Aa, \lambda)\right\}.$
Then $(\Sigma, T)$ presents a group isomorphic to $\Z$, the infinite cyclic group.

\end{example}

\begin{example}\label{eg:FiniteGroup}
Let $G$ be a finite group, let $\Sigma = G \setminus \{e_G\}$ and let 
\[T =\left\{(gh, k) \mid g, h, k \in \Sigma \text{ and } gh =_G k\} \cup \{(gh, \lambda) \mid g,h \in \Sigma \text{ and } g =_G h^{-1}\right\}.\]
Then $(\Sigma, T)$ presents a group isomorphic to $G$.
\end{example}

A rewriting system $(\Sigma, T)$ is {\em finite} if $\Sigma$ and $T$ are finite sets,  {\em terminating} (or {\em noetherian)} if there are no infinite sequences of allowable factor replacements, and {\em length-reducing} if for all $(\ell, r) \in T$ we have that $|\ell| > |r|$.  It is clear that length-reducing rewriting systems are terminating.
A rewriting system is called {\em confluent} if for all $w,x,y\in \Sigma^*$, if  $w\overset{\ast}{\to} x$ and $w\overset{\ast}{\to} y$ then there exists $z\in \Sigma^*$ such that $x\overset{\ast}{\to} z$ and $y\overset{\ast}{\to} z$. 
A rewriting system is called {\em convergent} if it is terminating and confluent.   The following lemma (see, for example, \cite[Theorem 1.13, p.13]{BookOtto}) illustrates the utility of convergent rewriting systems.

\begin{lemma}\label{lem:ConvergentRewriting}
In a convergent rewriting system, rewriting any word in $\Sigma^*$ until you can rewrite no more is an algorithm for producing the unique irreducible word (the normal form) representing the same element.
\end{lemma}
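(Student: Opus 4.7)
My plan is to verify four separate claims implicit in the lemma: (i) the rewriting process halts, (ii) when it halts the resulting word is irreducible, (iii) the resulting word represents the same element as the input, and (iv) the resulting word depends only on the equivalence class of the input, not on the particular word or the particular sequence of rewrites chosen. Only (iv) is substantive; (i)--(iii) are essentially unpacking definitions.

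For (i), termination (which is part of convergence) says precisely that no infinite chain $w \to w_1 \to w_2 \to \cdots$ exists, so any maximal sequence of rewrites starting at $w$ is finite. For (ii), a word at which ``you can rewrite no more'' is exactly a word containing no left-hand side of a rule as a factor, which is the definition of irreducible. For (iii), each single rewrite $u \ell v \to u r v$ uses a rule $(\ell, r) \in T$, and by definition of the equivalence relation $\overset{\ast}{\leftrightarrow}$ this preserves the $\overset{\ast}{\leftrightarrow}$-class; hence the final irreducible word lies in the same class as the original, i.e.\ represents the same monoid/group element.

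The main step is (iv), which I would prove in two stages. First I would show the ``local'' uniqueness claim: if $w \overset{\ast}{\to} u$ and $w \overset{\ast}{\to} v$ with both $u$ and $v$ irreducible, then $u = v$. Indeed, confluence supplies $z \in \Sigma^*$ with $u \overset{\ast}{\to} z$ and $v \overset{\ast}{\to} z$; since $u$ and $v$ are irreducible, the relation $\overset{\ast}{\to}$ starting from each of them can only be the trivial (reflexive) one, giving $u = z = v$. Second, I would upgrade this to the ``global'' Church--Rosser statement: whenever $x \overset{\ast}{\leftrightarrow} y$, $x$ and $y$ reduce to a common word. This is standard induction on the number of $\leftrightarrow$-steps in a chain $x = x_0 \leftrightarrow x_1 \leftrightarrow \cdots \leftrightarrow x_n = y$; at each peak $x_{i-1} \leftarrow x_i \rightarrow x_{i+1}$ one applies confluence to merge the two descending paths, then splices. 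Consequently, if $x$ and $y$ represent the same element, any terminal irreducible word reached from $x$ and any terminal irreducible word reached from $y$ are both reductions of a common word and hence equal by the local uniqueness above.

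Finally, I would remark that because termination guarantees the process halts and the only choices made along the way (which redex to rewrite, which rule to apply) are immaterial to the final output by (iv), the ``rewrite until stuck'' procedure is a deterministic recipe producing the same irreducible normal form for every input representing a given element; when $\Sigma$ and $T$ are finite, searching a word for a left-hand-side factor is trivially decidable, so the procedure is genuinely an algorithm. The only conceptual obstacle is the lift from local to global confluence (the Church--Rosser step), and it is handled by the routine diamond-chasing induction sketched above.
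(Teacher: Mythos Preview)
The paper does not actually give its own proof of this lemma; it simply cites \cite[Theorem 1.13, p.13]{BookOtto} and moves on. Your argument is correct and is exactly the standard textbook proof: termination gives halting, confluence plus irreducibility gives local uniqueness of the output, and the routine induction lifting confluence to the Church--Rosser property handles uniqueness across an entire $\overset{\ast}{\leftrightarrow}$-class. Your remark that finiteness of $\Sigma$ and $T$ is what makes the procedure an algorithm in the effective sense is a nice addendum not made explicit in the paper, though in context all rewriting systems under consideration are finite anyway.
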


The following simple lemma is provided without proof.  The corollary is easily proved by applying the lemma to the rewriting systems exhibited in Examples~\ref{eg:InfiniteCyclicGroup} and \ref{eg:FiniteGroup}.

\begin{lemma}[Combining rewriting system to present free products]\label{eg:FreeProduct}
Suppose that $(\Sigma_1, T_1), \dots$, $(\Sigma_n, T_n)$ are rewriting systems presenting groups $G_1, \dots, G_n$ respectively and such that the alphabets $\Sigma_1, \dots, \Sigma_n$ are pairwise disjoint.  The combined rewriting system
$\left( \cup_{i=1}^n \Sigma_i, \cup_{i=1}^n T_i\right)$
presents the free product $G_1 \ast \cdots \ast G_n$. 
\end{lemma}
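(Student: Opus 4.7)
The plan is to construct mutually inverse homomorphisms between $M = M(\cup_i \Sigma_i, \cup_i T_i)$ and the free product $F = G_1 \ast \cdots \ast G_n$. The essential observation, which does nearly all the work, is that pairwise disjointness of the $\Sigma_i$ forces every rule in $\cup_i T_i$ to lie entirely inside some single $\Sigma_i^* \times \Sigma_i^*$, so rules from different factors cannot interact with each other on any occurrence in a word.

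To build $\psi \colon M \to F$, I would, for each $i$, compose the quotient $\Sigma_i^* \twoheadrightarrow M(\Sigma_i, T_i) = G_i$ with the inclusion $G_i \hookrightarrow F$, obtaining a monoid homomorphism $\Sigma_i^* \to F$; the universal property of the free monoid on $\cup_i \Sigma_i$ assembles these into $\tilde\psi \colon (\cup_i \Sigma_i)^* \to F$. Since every rule of $\cup_i T_i$ belongs to some $T_i$, and the restriction of $\tilde\psi$ to $\Sigma_i^*$ already factors through $M(\Sigma_i, T_i)$, the map $\tilde\psi$ respects all rewriting rules and so descends to $\psi \colon M \to F$.

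For $\phi \colon F \to M$, the inclusion $\Sigma_i^* \hookrightarrow (\cup_j \Sigma_j)^* \twoheadrightarrow M$ factors through $M(\Sigma_i, T_i) = G_i$ because $T_i \subseteq \cup_j T_j$, yielding a homomorphism $G_i \to M$. This is a group homomorphism because $M$ is a group: each letter $a \in \Sigma_i$ has an inverse in $M$ inherited from the word over $\Sigma_i$ that represents $a^{-1}$ in $G_i$, and Lemma~\ref{lem:ConvergentRewriting}-style reasoning (really just the definition of the presented monoid) confirms that the same word serves as inverse in $M$. The universal property of the free product then assembles the maps $G_i \to M$ into $\phi \colon F \to M$.

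Finally, $\phi \circ \psi$ and $\psi \circ \phi$ agree with the respective identity maps on generators by construction, so they are the identities. The whole argument is routine universal-property bookkeeping; the only facts one actually uses about rewriting systems are that each $T_i$ sits inside the combined rule set, so rules valid in $G_i$ remain valid in $M$, and conversely that the rules of $M$ decompose as the disjoint union of the $T_i$. If there is any subtlety worth flagging, it is only the verification that $M$ is a group (rather than merely a monoid) before invoking the free-product universal property.
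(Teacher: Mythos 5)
The paper explicitly states that this lemma ``is provided without proof,'' so there is no argument of the authors' to compare yours against; what can be said is that your proof is correct and is the standard one. The two directions are assembled exactly as one would expect: $\psi$ from the universal property of the free monoid on $\cup_i\Sigma_i$ (checking it kills every rule because each rule lies in some $T_i$ and the $i$-th component already factors through $G_i$), and $\phi$ from the universal property of the free product once each $G_i\to M$ is known to be a group homomorphism, for which you rightly flag that one must first verify $M$ is a group --- each letter of $\Sigma_i$ inherits its inverse word from $(\Sigma_i,T_i)$, so every generator, hence every element, of $M$ is invertible. Checking the composites on generators finishes it. One small correction of emphasis: disjointness of the alphabets does not act by forcing each rule into a single $\Sigma_i^*\times\Sigma_i^*$ (that holds by the definition of $T_i$ regardless); its real role is to make $\tilde\psi$ well defined, since a letter must belong to exactly one $\Sigma_i$ for its image in $G_1\ast\cdots\ast G_n$ to be unambiguous. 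Without disjointness the statement is false (take $\Sigma_1=\Sigma_2$, $T_1=T_2$), and your proof does break at precisely that point, so the logic is sound; you have simply attributed the hypothesis's work to the wrong step in your opening paragraph.
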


\begin{corollary}\label{cor:Plain}
If $G$ is a plain group, then $G$ admits presentation by a finite convergent length-reducing rewriting system $(\Sigma, T)$ where 
$\Sigma=\Sigma^{-1}$ and the left-hand side of every rule has length equal to two.
\end{corollary}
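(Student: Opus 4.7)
The plan is to construct the desired rewriting system by combining building blocks supplied by Examples~\ref{eg:InfiniteCyclicGroup} and~\ref{eg:FiniteGroup} in the manner prescribed by Lemma~\ref{eg:FreeProduct}. Since $G$ is plain, we may write $G \cong G_1 \ast \cdots \ast G_n$ where each $G_i$ is either a finite group or an infinite cyclic group. For each $i$, I would select a rewriting system $(\Sigma_i, T_i)$ presenting $G_i$: if $G_i \cong \Z$, take the system of Example~\ref{eg:InfiniteCyclicGroup}; if $G_i$ is finite, take the system of Example~\ref{eg:FiniteGroup}. By renaming letters if necessary, I may assume that the alphabets $\Sigma_1, \dots, \Sigma_n$ are pairwise disjoint, so that Lemma~\ref{eg:FreeProduct} applies and the combined system $(\Sigma, T) := (\bigcup_i \Sigma_i, \bigcup_i T_i)$ presents $G_1 \ast \cdots \ast G_n \cong G$.

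It then remains to verify that $(\Sigma, T)$ has all the properties asserted. Finiteness is immediate since there are finitely many factors and each $(\Sigma_i, T_i)$ is finite. Inspection of the two example systems shows that every rule has left-hand side of length exactly two and right-hand side of length at most one; thus the combined system is length-reducing with every left-hand side of length two, and in particular is terminating. The property $\Sigma = \Sigma^{-1}$ follows because in Example~\ref{eg:InfiniteCyclicGroup} we have $\Sigma_i = \{a, A\} = \Sigma_i^{-1}$, and in Example~\ref{eg:FiniteGroup} we have $\Sigma_i = G_i \setminus \{e\}$, which is closed under taking inverses in $G_i$; taking unions preserves the property.

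The only genuinely substantive point is confluence of the combined system. Because the alphabets $\Sigma_i$ are pairwise disjoint, any two overlapping left-hand sides of rules in $T$ must both come from the same $T_i$, so every critical pair of $(\Sigma, T)$ is a critical pair of some $(\Sigma_i, T_i)$. It therefore suffices to verify that each $(\Sigma_i, T_i)$ is itself confluent. For Example~\ref{eg:InfiniteCyclicGroup} this is the standard free-reduction system for $\Z$; for Example~\ref{eg:FiniteGroup} every two-letter word rewrites in one step to an irreducible word of length at most one, and there is exactly one irreducible word per element of $G_i$, so the irreducible normal forms are unique and confluence follows. Combined with termination this yields convergence of $(\Sigma, T)$, completing the proof.

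The main (and only) thing to be careful about is the confluence argument for the combined system; once one observes that disjointness of alphabets prevents any new critical pairs from forming between rules of different factors, the rest of the verification is routine.
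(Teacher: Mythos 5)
Your proof is correct and follows exactly the route the paper intends: decompose the plain group as a free product, take the systems of Examples~\ref{eg:InfiniteCyclicGroup} and~\ref{eg:FiniteGroup} on disjoint alphabets, and combine them via Lemma~\ref{eg:FreeProduct}. Your explicit verification of confluence (no critical pairs across factors because the alphabets are disjoint, and each factor system is confluent) is a detail the paper leaves implicit, but it is accurate and does not change the argument.
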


 \subsection{Graph theory}\label{sec:GraphTheory}
 
A simple undirected graph $\Delta$ is a pair comprising a nonempty set $V(\Delta)$, the set of {\em vertices}, and a set of two-element subsets $E(\Delta)$, the set of {\em edges}.  The vertices that form an edge are said to be {\em adjacent}. All graphs considered in this paper will be simple and undirected. For the remainder of this section, fix a simple undirected graph $\Delta$.

A {\em path} of length $n$  in $\Delta$ from  a vertex $u$ to  a vertex $v$ is a sequence of vertices $u=u_0, u_1, \dots, u_n=v$ with the property that $u_{i-1}$ and $u_i$ are adjacent for $i = 1, \dots, n$.  A path from $u$ and $v$ is called a \emph{geodesic} if there is no shorter path in $\Delta$ from $u$ to $v$.  If for each pair $(u, v)$ of distinct vertices in $\Delta$ there is at least one path in $\Delta$ from $u$ to $v$, we say that $\Delta$ is {\em connected}; if for each pair $(u, v)$ of distinct vertices in $\Delta$ there exists a unique geodesic from $u$ to $v$, we say that $\Delta$ is \emph{geodetic}.  If $\Delta$ is connected, there is a natural metric $d$ on the vertex set of $\Delta$ such that $d(u, v)$ is the length of a shortest path in $\Delta$  from $u$ to $v$.

A {\em circuit} is a path $u_0, u_1, \dots, u_n$ where $u_0=u_n$. A {\em sub-path} of a circuit  $u_0, u_1, \dots, u_n$ is either a path $u_i,\dots u_j$ where $0\leq i\leq j\leq n$ or a path $u_i,\dots, u_n, u_1,\dots, u_j$ where $1\leq j\leq i\leq n$. 
 A circuit $u_0, u_1, \dots, u_n$ is  {\em embedded} if the vertices $u_0, \dots, u_{n-1}$ are distinct.
 An embedded circuit  in $\Delta$ is {\em isometrically embedded} if the subgraph comprising the vertices in the circuit and the edges between consecutive vertices is convex in $\Delta$; that is, $d(u_i, u_j) = \min\{j-i, n+i-j\}$ for all $0 \leq i < j < n$. 
 We will use the acronym \iec\ for {\em isometrically embedded circuit}. We note that if $u, v$ are adjacent vertices in $\Delta$, then the path $u, v, u$ is an isometrically embedded circuit of length two.  We also note that in a geodetic graph, the unique geodesic joining two vertices of  an \iec\ is a subpath of the \iec.

A vertex $v$ in $\Delta$ is a {\em cut vertex} if $\Delta$ is connected, but the graph obtained from $\Delta$ by removing $v$ and the edges incident to $v$ is disconnected.   A graph is two-connected if it is connected and has no cut vertices.   The maximal two-connected subgraphs of a graph $\Delta$ are called {\em blocks}.  It follows immediately from the maximality of blocks that any block $B$ in $\Delta$ is the subgraph of $\Delta$ induced by the vertex set of $B$.  In a connected graph having at least two vertices,  each block has at least two vertices.  The following well-known  characterisation of blocks (see, for example, \cite[Theorem 5.4.3, p. 87]{Ore}) will be useful throughout our argument.

\begin{lemma}\label{lem:Whitman}
 Let $\Delta$ be a simple undirected graph. Two vertices $u, v$ of $\Delta$  lie in the same block if and only if there exists an embedded circuit in $\Delta$ that visits both.
 \end{lemma}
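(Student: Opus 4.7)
The plan is to prove both directions using the classical fact (Whitney's theorem, a special case of Menger's theorem) that a graph on at least three vertices is 2-connected if and only if any two distinct vertices are joined by two internally vertex-disjoint paths.

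For the ``if'' direction, first I would verify that the subgraph $H$ spanned by an embedded circuit $C$ visiting $u$ and $v$ is itself 2-connected. If $C$ has length at least three then $H$ is a cycle, which is 2-connected; if $C$ has length two then, by the definition in Section~\ref{sec:GraphTheory}, $C$ is a closed walk $u,v,u$ along a single edge, and a single edge is easily seen to lie in a unique block. Next I would invoke the standard union lemma (if $H_1, H_2$ are 2-connected subgraphs sharing at least two vertices then $H_1 \cup H_2$ is 2-connected) to conclude that every 2-connected subgraph is contained in a unique maximal 2-connected subgraph, i.e.\ a unique block, and therefore $u$ and $v$ lie in a common block.

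For the ``only if'' direction, let $B$ be a block containing both $u$ and $v$. If $|V(B)| = 2$ then $B$ is a single edge $\{u,v\}$ and the length-two embedded circuit $u,v,u$ already does the job. Otherwise $B$ is 2-connected on at least three vertices, so Whitney's theorem yields two internally vertex-disjoint paths $P_1$ and $P_2$ from $u$ to $v$ in $B$. Their concatenation, obtained by traversing $P_1$ from $u$ to $v$ and then the reversal of $P_2$ back to $u$, is a closed walk whose internal vertices are distinct by disjointness; that is, an embedded circuit visiting both $u$ and $v$.

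I do not expect serious obstacles: the result is standard textbook material. The only technical points that require care are the boundary case of length-two circuits under the paper's slightly unusual convention and the use of the union lemma to pass from ``contained in a 2-connected subgraph'' to ``contained in a block''; both are routine once flagged.
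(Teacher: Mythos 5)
Your proof is correct and follows the standard textbook argument via Whitney's characterisation of 2-connectivity, handling the paper's length-two circuit convention appropriately. The paper itself gives no proof of this lemma --- it is stated as well known with a citation to Ore --- so there is nothing to compare beyond noting that your route is exactly the classical one.
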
 

Given a connected graph $\Delta$, the {\em block-cut tree} $T= T(\Delta)$ is a well-known construction which encodes the block structure of $\Delta$.  The graph $T$ has one vertex $v_x$ (of type I) for each vertex $x$ of $\Delta$, and one vertex $v_B$ (of type II) for each block $B$ of $\Delta$; a type I vertex $v_x$ is adjacent in $T$ to a type II vertex $v_B$ if $x$ is a vertex in the block $B$.  For any connected graph $\Delta$, the block-cut tree $T(\Delta)$ is a tree (a connected graph in which every embedded circuit has length at most two).  See for example Figure~\ref{fig:BCT}.

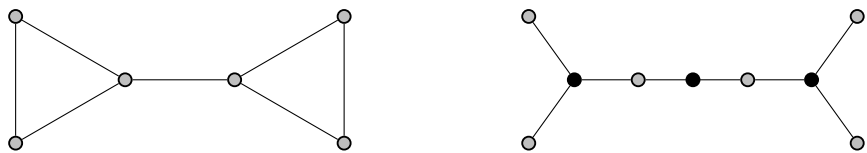
\begin{figure}[!ht]
	\centering

\smallskip
 \begin{tikzpicture}[rotate=90,
  scale=0.6]

   \node[dot] (d01) at (0,1.4) {};
   \node[dot] (d02) at (1.4,-1 ) {};
   \node[dot] (d03) at (-1.4,-1) {};
 
      \draw(d02) to (d01); 
         \draw (d03) to (d02); 
             \draw (d01) to (d03);

   \node[dot] (d11) at (0,3.8) {};
   \node[dot] (d12) at (1.4,6.2) {};
   \node[dot] (d13) at (-1.4,6.2) {};
 
     \draw (d12) to (d11); 
         \draw (d13) to (d12); 
             \draw (d11) to (d13); 
                 \draw (d01) to (d11);          
 \end{tikzpicture}
 \hspace{20mm}
  \begin{tikzpicture}[rotate=90,
   scale=0.6]

     \node[dot, fill=black!100] (e) at (0,2.6) {};
        \node[dot, fill=black!100] (d10) at (0,5.2) {};
           \node[dot, fill=black!100] (d00) at (0,0) {};
           
   \node[dot] (d01) at (0,1.4) {};
   \node[dot] (d02) at (1.4,-1 ) {};
   \node[dot] (d03) at (-1.4,-1) {};

     \draw (d00) to (d01); 
         \draw (d00) to (d02); 
             \draw (d00) to (d03);    

   \node[dot] (d11) at (0,3.8) {};
   \node[dot] (d12) at (1.4,6.2) {};
   \node[dot] (d13) at (-1.4,6.2) {};

                 \draw (d10) to (d11); 
         \draw (d10) to (d12); 
             \draw (d10) to (d13); 
                 \draw (d01) to (d11);

 \end{tikzpicture}
 
	\caption{Example of a graph and its block-cut tree. Type II vertices are solid black.}
	\label{fig:BCT}
\end{figure}

 \subsection{Key lemma and broomlike graphs}
 
The following lemma and its proof 
are  paraphrased from \cite[Proposition 6.3]{EP}.

\begin{lemma}
\label{lem:EP}
 Let $\Gamma$ be a geodetic graph, and let $u_0, u_1, \dots, u_n$ and $u_0, u'_1, \dots, u'_n$
 be equal length geodesics in $\Gamma$ such that $u_1 \neq u'_1$ and $d(u_n, u'_n) = 1$.  Then 
 \[u_0, u_1, \dots, u_n, u'_n, \dots, u'_1, u_0\] is an \iec. \end{lemma}

 \begin{figure}[!ht]
	\centering

	\begin{tikzpicture}[auto,node distance=.5cm,
    latent/.style={circle,draw,very thick,inner sep=0pt,minimum size=30mm,align=center},
    manifest/.style={rectangle,draw,very thick,inner sep=0pt,minimum width=45mm,minimum height=10mm},
    paths/.style={->, ultra thick, >=stealth'},
]

    \node[dot] (0)  [label={[xshift=-.7cm, yshift=-.3cm]$u_0=v_0$}] {};
    
    \node[dot] (1) [label={[xshift=0cm, yshift=0cm]$v_1$} ] at ($ (0)   + (40:1)$) {};

        \node[dot] (2)  [label={[xshift=0cm, yshift=0cm]$v_2$}, above right = .2em and 3em of 1]  {};
                \node[dot] (3)  [label={[xshift=0cm, yshift=0cm]$v_{n-1}$}, below  right = .1em and 10em of 2]  {};
      
                                \node[dot] (4)  [label={[xshift=0.6cm, yshift=0cm]$v_n=u_n$}, below right = .3em and 2.2em of 3]  {};
    \node[dot] (11)  [label={[xshift=0cm, yshift=-.65cm]$v_{2n}$}, below right = 2em and 1.5em of 0] {};
        \node[dot] (12)  [label={[xshift=0.1cm, yshift=-.67cm]$v_{2n-1}$}, below right = .5em and 2.5em of 11] {};
  \node[dot] (14)   [label={[xshift=0.1cm, yshift=-.670cm]$v_{n+2}$}, below = 4.4em  of 3] {};
        \node[dot] (15) [label={[xshift=0.6cm, yshift=-.670cm]$v_{n+1}=u_n'$},  above right = 1em and 2.3em   of 14]  {};
  
    \path[draw,thick]
    (0) edge node {} (1)
     (1) edge node {} (2)
               (3) edge node {} (4)
    (0) edge node {} (11)
      (11) edge node {} (12)
       (14) edge node {} (15)
                 (4) edge node {}  (15);

            \draw [thick,dashed] (2) to [bend left=15]  (3)
            (12) to [bend right=15]  (14);
  \end{tikzpicture}

	\caption{Geodesics in Lemma~\ref{lem:EP}, relabeled as in the proof.}
	\label{fig:EP}
\end{figure}
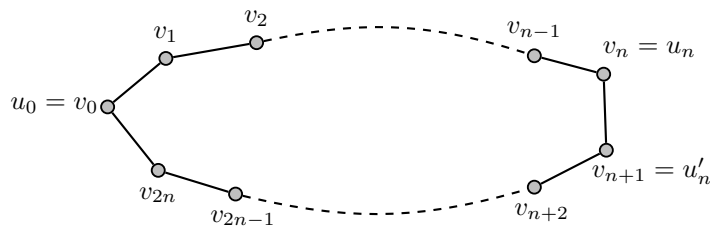

 \begin{proof}
Since $\Gamma$ is geodetic and $u_1 \neq u'_1$, the sets $\{u_1, \dots, u_n\}$ and $\{u'_1, \dots, u'_n\}$ are disjoint.  It is convenient to relabel the vertices $v_0, \dots, v_{2n}$ so that
 \[v_0 = u_0, \dots v_n = u_n, v_{n+1} = u'_n, \dots, v_{2n} = u'_1.\] In what follows we shall consider the index $i$ of a vertex $v_i$  modulo $2n+1$.
 
Using induction, we shall prove the following statement $S(i)$ for all $i$:
The paths
 \[v_i, v_{i+1}, \dots, v_{i+n} \text { and } v_i, v_{i-1}, \dots, v_{i-n}\]
 are geodesics.  The result follows immediately.

That $S(0)$ holds is immediate from the hypotheses.  Suppose that $S(i)$ holds for some index $i$.  It follows that $v_{i+1}, \dots, v_{i+n}$ is the unique geodesic from $v_{i+1}$ to $v_{i+n}$, because it is a subpath of the geodesic $v_i, v_{i+1}, \dots, v_{i+n}$.  It follows immediately that $d(v_{i+1}, v_{i+n}) = n-1$.

If $d(v_{i+1}, v_{i+n+1}) < n$, then there is a path of length at most $n$ from $v_{i}$ to $v_{i+n+1}= v_{i-n}$ through $v_{i+1}$. This contradicts the fact that $v_{i}, v_{i-1}, \dots, v_{i-n}$ is the unique geodesic from $v_{i}$ to $v_{i-n}$. It follows that $d(v_{i+1}, v_{i+n+1}) \geq n$, from which it follows that  $v_{i+1}, v_{i+2}, \dots, v_{i+n+1}$ is the unique geodesic from $v_{i+1}$ to $v_{i+n+1}$. 

If $d(v_{i+1}, v_{i+1-n}) < n$, then there is a path of length at most $n$ from $v_{i+1}$ to $v_{i-n}= v_{i+n+1}$ through $v_{i+1-n}$.  This contradicts the fact, just shown, that $v_{i+1}, v_{i+2}, \dots, v_{i+n+1}$ is the unique geodesic from $v_{i+1}$ to $v_{i+n+1}$.  It follows that $d(v_{i+1}, v_{i+1-n}) \geq n$, from which it follows that and $v_{i+1}, v_{i+1-1}, \dots, v_{i+1-n}$ is the unique geodesic from $v_{i+1}$ to $v_{i+1-n}$.
 \end{proof}

 We make the following definition. Our vocabulary borrows from \cite{Broomlike}.   
 
 \begin{definition}[$s$-broomlike]\label{def:strongfftp}
Let $\Delta$ be a geodetic graph and $s$ a positive integer.
We say that $\Delta$ is {\em $s$-broomlike}
if whenever $a_0, \dots, a_{n-1}, a_n, b$ is a path comprising distinct vertices such that $a_0, \dots, a_n$ is a geodesic but $a_0, \dots, a_n, b$ is not, then the geodesic from $a_0$ to $b$ is $a_0, \dots, a_{n-p}, b_{n-p+1}, \dots, b_n=b$ for $p \leq s$ and $b_{n-p+1} \neq a_{n-p+1}$.
\end{definition}

  \begin{figure}[!ht]
	\centering

\begin{tikzpicture}[auto,node distance=.5cm,
    latent/.style={circle,draw,very thick,inner sep=0pt,minimum size=30mm,align=center},
    manifest/.style={rectangle,draw,very thick,inner sep=0pt,minimum width=45mm,minimum height=10mm},
    paths/.style={->, ultra thick, >=stealth'},
]

   \node[dot] [label={[xshift=-.3cm, yshift=-.3cm]$a_0$}](0) {};
    \node[dot] (1) [label={[xshift=0cm, yshift=0cm]$a_1$}, above right= 2em and 2em of 0]  {};
        \node[dot] (2) [label={[xshift=0cm, yshift=0cm]$a_{n-p}$},above right = 3em and 8em of 1]  {};
                \node[dot] (3) [label={[xshift=0cm, yshift=-.65cm]$b_{n-p+1}$}, below right = 2em and 3.5em of 2]  {};
                       
                \node[dot] (5) [label={[xshift=0cm, yshift=0cm]$a_{n-p+1}$},right = 3em of 2]  {};

 \node[dot] (6x) [label={[xshift=0cm, yshift=0cm]$a_{n-1}$}, right = 4.5em of 5]  {};              
 \node[dot] (4x) [label={[xshift=0cm, yshift=-.7cm]$b_{n-1}$},right = 4.5em of 3]  {};

 \node[dot] (6) [label={[xshift=0cm, yshift=0cm]$a_n$}, right = 2.5em of 6x]  {};              
 \node[dot] (4) [label={[xshift=0cm, yshift=-.7cm]$b$},right = 2.5em of 4x]  {};

    \path[draw,thick]
    (0) edge node {} (1)
    
                (6) edge node {} (6x)
                         (2) edge node {} (5)
     (6) edge node {} (4);

        \path[draw]
              (2) edge node {} (3)
         (4) edge node {} (4x);
    
      \draw [dashed] 
     (3) edge node {} (4x);
           
             \draw [thick,dashed] 
              (5) edge node {} (6x)
              
              (1) to [bend left=20]  (2);
      
\end{tikzpicture}
	\caption{Illustrating the $s$-broomlike property (Definition~\ref{def:strongfftp}).}
	\label{fig:DEFNbroomlike}
\end{figure}
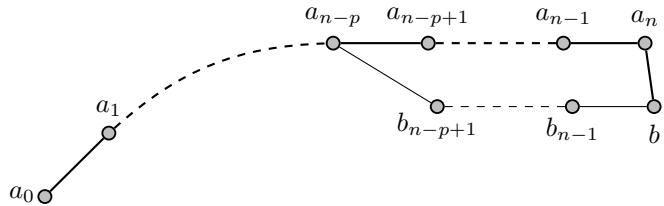

  \begin{lemma}
  \label{lem:IECbroomlike}
Let $\Delta$ be a geodetic graph and 
 $s$ a positive integer.
If every  \iec\ in $\Delta$  has length  at most $2s+1$, 
    then 
$\Delta$ is  $s$-broomlike.
  \end{lemma}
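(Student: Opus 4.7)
The plan is to form the unique geodesic from $a_0$ to $b$, show that its length must be exactly $n$, locate where it branches off from the given geodesic $a_0,\dots,a_n$, and then extract an \iec\ via Lemma \ref{lem:EP} whose length encodes the bound I need.

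First I would fix a path $a_0,\dots,a_n,b$ as in Definition \ref{def:strongfftp} and let $c_0,\dots,c_m$ be the unique geodesic from $a_0$ to $b$. Since $a_n$ and $b$ are adjacent and $a_0,\dots,a_n,b$ fails to be a geodesic, the triangle inequality forces $m=d(a_0,b)\in\{n-1,n\}$. The key observation is that $m=n-1$ can be ruled out: if it held, appending $a_n$ to $c_0,\dots,c_{n-1}=b$ would produce a length-$n$ path from $a_0$ to $a_n$, which is then a geodesic, and by uniqueness of geodesics in $\Delta$ must equal $a_0,\dots,a_n$; but this forces $b=a_{n-1}$, contradicting distinctness of the vertices in the path.

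With $m=n$ established, I would let $q$ be the largest index with $c_q=a_q$ and set $p=n-q$, so automatically $c_{q+1}\neq a_{q+1}$ and $1\le p\le n$. Then $a_q,a_{q+1},\dots,a_n$ and $a_q=c_q,c_{q+1},\dots,c_n=b$ are equal-length geodesics from $a_q$ with distinct second vertices and with adjacent endpoints $a_n,b$, so Lemma \ref{lem:EP} applies and yields that
\[a_q,a_{q+1},\dots,a_n,b,c_{n-1},\dots,c_{q+1},a_q\]
is an \iec\ of length $2p+1$. The hypothesis then forces $2p+1\le 2s+1$, i.e.\ $p\le s$, which, setting $b_i=c_i$ for $n-p+1\le i\le n$, is exactly the conclusion of Definition \ref{def:strongfftp}.

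The main obstacle is the length bookkeeping in the final step: the argument only comes out right because the \iec\ I construct has odd length $2p+1$ and the hypothesis is stated with the matching odd bound $2s+1$. Ruling out $m=n-1$ is the other subtle point, and it is precisely what guarantees that the two geodesics I feed into Lemma \ref{lem:EP} have the same length, so that the lemma is even applicable.
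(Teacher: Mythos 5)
Your proof is correct and takes essentially the same route as the paper's: both identify the unique geodesic from $a_0$ to $b$, split off its longest common prefix with $a_0,\dots,a_n$, and apply Lemma~\ref{lem:EP} to the two diverging suffixes to obtain an \iec\ of length $2p+1\le 2s+1$, hence $p\le s$. Your explicit ruling-out of $d(a_0,b)=n-1$ just fills in the step the paper compresses into ``since $\Delta$ is geodetic, $|\alpha'|=|\beta'|$.''
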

 
 \begin{proof}
Let $a_0, \dots, a_{n-1}, a_n, b$ be a path comprising distinct vertices such that $\alpha=a_0, \dots, a_n$ is a geodesic but $a_0, \dots, a_n, b$ is not.
Let $\beta$ be the geodesic from $a_0$ to $b$, and let $\tau=a_0,\dots, a_{n-p}$ be the longest prefix shared by $\alpha$ and $\beta$, where $0< p\leq n$.
Then 
$\alpha=\tau\alpha'$ and $\beta=\tau\beta'$ with $\alpha'=a_{n-p},a_{n-p+1}\dots, a_n$ and $\beta'=a_{n-p}, b_{n-p+1}, \dots, b$
both geodesics, and  $a_{n-p+1}\neq b_{n-p+1}$ for if not we could have made $\tau$ longer.

Since $\Delta$ is geodetic,  $|\alpha'|=|\beta'|$, so  $b_n=b$.
Then $\alpha',\beta'$ satisfy the hypothesis of Lemma~\ref{lem:EP}, which means
\[a_{n-p},a_{n-p+1}\dots, a_n, b=b_n, b_{n-1},\dots, 
 b_{n-p+1}, a_{n-p}\] is an \iec, so 
 its length is bounded by $2s+1$, which means $|\beta'|=|\alpha'|=p\leq s$.
 \end{proof}

\subsection{Cayley graphs}

An important and much studied connection between graph theory and group theory is via the Cayley graph.  In this article, we consider the undirected Cayley graph corresponding to a group and a choice of finite generating set. For any group $G$ let $e_G$ denote the identity element.

For a group $G$ and a generating set $\Sigma$, the \emph{undirected Cayley graph of $G$ with respect to $\Sigma$} is the  simple undirected graph $\Gamma = \Gamma(G, \Sigma)$ with vertex set $G$ and in which distinct vertices $g, h \in G$ are adjacent if and only if $g^{-1} h \in \Sigma \cup \Sigma^{-1}$.  See for example Figure~\ref{fig:example23}. If $\Sigma$ is finite then $\Gamma$ is locally finite.
Each path $u_0, u_1, \dots, u_n$ in $\Gamma$ is labeled by a word $a_1 \dots a_n \in (\Sigma \cup \Sigma^{-1})^\ast$ where $a_i =_G u_{i-1}^{-1} u_i$.  A geodesic path in $\Gamma$ from $e_G$ to $g$ is a shortest word in $(\Sigma \cup \Sigma^{-1})^\ast$ spelling the group element $g$.

Note that by definition if $x\in\Sigma$ and $x=_Ge_G$ then $x$ will not appear as the label of any edge in $\Gamma(G,\Sigma)$. Also if $x,y\in\Sigma$ and $x=_Gy$ then the unique edge joining adjacent vertices  $g$ to $gx$ in $\Gamma(G,\Sigma)$ may be labeled by either $x$ or $y$. 

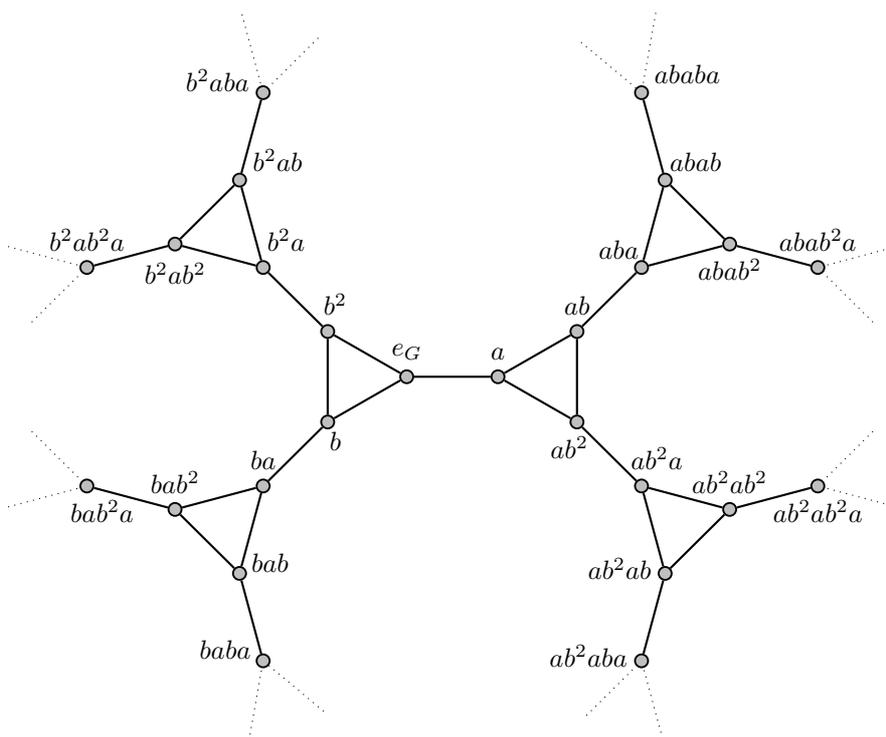
\begin{figure}[!ht]
	\centering

 \begin{tikzpicture}[ scale=0.6]
		 
\node[dot] (d01)  [label={[xshift=0cm, yshift=0cm]$a$}]  {};
\node[dot] (d02)  [label={[xshift=-0cm, yshift=0cm]$ab$}] at ($ (d01)   + (30:2)$) {};
\node[dot] (d03)  [label={[xshift=-.1cm, yshift=-.7cm]$ab^2$}] at ($ (d01)   + (-30:2)$) {};

\node[dot] (d04)  [label={[xshift=-0cm, yshift=0cm]$e_G$}] at ($ (d01)   - (0:2)$) {};
\node[dot] (d05)  [label={[xshift=.1cm, yshift=-.6cm]$b$}] at ($ (d04)   - (30:2)$) {};
\node[dot] (d06)  [label={[xshift=.1cm, yshift=0cm]$b^2$}] at ($ (d04)   - (-30:2)$) {};

\node[dot] (d07)  [label={[xshift=-.3cm, yshift=-.1cm]$aba$}] at ($ (d02)  + (45:2)$) {};
\node[dot] (d08)  [label={[xshift=.4cm, yshift=-.1cm]$abab$}] at ($ (d07)   + (75:2)$) {};
\node[dot] (d09)  [label={[xshift=0cm, yshift=-.7cm]$abab^2$}] at ($ (d07)   +(15:2)$) {};

\node[dot] (d10)  [label={[xshift=.2cm, yshift=0cm]$ab^2a$}] at ($ (d03)  + (-45:2)$) {};
\node[dot] (d11)  [label={[xshift=-.6cm, yshift=-.3cm]$ab^2ab$}] at ($ (d10)   + (-75:2)$) {};
\node[dot] (d12)  [label={[xshift=0cm, yshift=0cm]$ab^2ab^2$}] at ($ (d10)   +(-15:2)$) {};

   \draw [thick](d02) to (d01); 
            \draw [thick] (d03) to (d02); 
                \draw [thick] (d01) to (d03); 
         \draw [thick](d01) to (d04); 
            \draw [thick] (d04) to (d05); 
                \draw [thick] (d05) to (d06); 
         \draw [thick](d06) to (d04); 
         \draw [thick](d02) to (d07); 
            \draw [thick](d07) to (d08); 
                  \draw [thick](d07) to (d09); 
                        \draw [thick](d08) to (d09); 
                              \draw [thick](d03) to (d10); 
            \draw [thick](d10) to (d11); 
                  \draw [thick](d11) to (d12); 
                        \draw [thick](d12) to (d10); 
                     
 \node[dot] (e07)  [label={[xshift=-0cm, yshift=0cm]$ba$}] at ($ (d05)  + (180+45:2)$) {};
\node[dot] (e08)  [label={[xshift=.4cm, yshift=-.2cm]$bab$}] at ($ (e07)   + (180+75:2)$) {};
\node[dot] (e09)  [label={[xshift=0cm, yshift=0cm]$bab^2$}] at ($ (e07)   +(180+15:2)$) {};

\node[dot] (e10)  [label={[xshift=.3cm, yshift=0cm]$b^2a$}] at ($ (d06)  + (180-45:2)$) {};
\node[dot] (e11)  [label={[xshift=.5cm, yshift=-.1cm]$b^2ab$}] at ($ (e10)   + (180-75:2)$) {};
\node[dot] (e12)  [label={[xshift=0cm, yshift=-.75cm]$b^2ab^2$}] at ($ (e10)   +(180-15:2)$) {};

         \draw [thick](d05) to (e07); 
            \draw [thick](e07) to (e08); 
                  \draw [thick](e07) to (e09); 
                        \draw [thick](e08) to (e09); 
                              \draw [thick](d06) to (e10); 
            \draw [thick](e10) to (e11); 
                  \draw [thick](e11) to (e12); 
                        \draw [thick](e12) to (e10);

\node[dot] (f08)  [label={[xshift=-.5cm, yshift=-.2cm]$baba$}] at ($ (e08)   + (210+75:2)$) {};
\node[dot] (f09)  [label={[xshift=.2cm, yshift=-.7cm]$bab^2a$}] at ($ (e09)   +(150+15:2)$) {};

\node[dot] (f11)  [label={[xshift=-.6cm, yshift=-.2cm]$b^2aba$}] at ($ (e11)   + (150-75:2)$) {};
\node[dot] (f12)  [label={[xshift=0cm, yshift=0cm]$b^2ab^2a$}] at ($ (e12)   +(210-15:2)$) {};

                            \draw [thick](e08) to (f08); 
              \draw [thick](e09) to (f09); 
                         \draw [thick](e11) to (f11); 
                                    \draw [thick](e12) to (f12);

\node[dot] (p09)  [label={[xshift=-0cm, yshift=0cm]$abab^2a$}] at ($ (d09)   + (-15:2)$) {};
\node[dot] (p08)  [label={[xshift=.6cm, yshift=-.1cm]$ababa$}] at ($ (d08)  +(105:2)$) {};

\node[dot] (p11)  [label={[xshift=-.7cm, yshift=-.3cm] $ab^2aba$}] at ($ (d11)   +(-105:2)$) {};
\node[dot] (p12)  [label={[xshift=0cm, yshift=-.7cm]$ab^2ab^2a$}] at ($ (d12)   +(15:2)$) {};

                            \draw [thick](p09) to (d09); 
              \draw [thick](p08) to (d08); 
                         \draw [thick](p11) to (d11); 
                                    \draw [thick](p12) to (d12);

\node (p09x)  at ($ (p09)   + (-45:2)$) {};
\node (p09y)   at ($ (p09)   + (15:2)$) {};
   \draw[dotted] (p09x) to (p09); 
      \draw[dotted] (p09y) to (p09);

      \node (p08x)  at ($ (p08)   + (140:2)$) {};
\node (p08y)   at ($ (p08)   + (80:2)$) {};
   \draw[dotted,] (p08x) to (p08); 
      \draw[dotted] (p08y) to (p08);

      \node (p11x)  at ($ (p11)   + (-135:2)$) {};
\node (p11y)   at ($ (p11)   + (-75:2)$) {};
   \draw[dotted,] (p11x) to (p11); 
      \draw[dotted] (p11y) to (p11);

      \node (p12x)  at ($ (p12)   + (45:2)$) {};
\node (p12y)   at ($ (p12)   + (-15:2)$) {};
   \draw[dotted,] (p12x) to (p12); 
      \draw[dotted] (p12y) to (p12);

\node (f09x)  at ($ (f09)   + (180-45:2)$) {};
\node (f09y)   at ($ (f09)   + (180+15:2)$) {};
   \draw[dotted] (f09x) to (f09); 
      \draw[dotted] (f09y) to (f09);

      \node (f08x)  at ($ (f08)   + (180+140:2)$) {};
\node (f08y)   at ($ (f08)   + (180+80:2)$) {};
   \draw[dotted,] (f08x) to (f08); 
      \draw[dotted] (f08y) to (f08);

      \node (f11x)  at ($ (f11)   + (180-135:2)$) {};
\node (f11y)   at ($ (f11)   + (180-75:2)$) {};
   \draw[dotted,] (f11x) to (f11); 
      \draw[dotted] (f11y) to (f11);

      \node (f12x)  at ($ (f12)   + (180+45:2)$) {};
\node (f12y)   at ($ (f12)   + (180-15:2)$) {};
   \draw[dotted,] (f12x) to (f12); 
      \draw[dotted] (f12y) to (f12);

 \end{tikzpicture}

	\caption{Part of the undirected Cayley graph $\Gamma(G,\{a,b\})$ for $G=C_2\ast C_3$ with presentation $\langle a,b\mid a^2=1, b^3=1\rangle$. }
	\label{fig:example23}
\end{figure}

\begin{remark}\label{rmk:LargeIECs}
Note that  the undirected Cayley graph for the group $G=C_2\ast C_3$ shown in Figure~\ref{fig:example23} is geodetic, and  isometrically embedded circuits have length at most 3. If we consider $G_{2n+1}=C_2\ast C_{2n+1}$ with presentation $\langle a,b\mid a^2=1, b^{2n+1}=1\rangle$ for arbitrarily $n\in\N$, the undirected Cayley graph is geodetic and has isometrically embedded circuits of length at most $2n+1$. 
 This family of examples shows that geodetic Cayley graphs may contain isometrically embedded circuits of any (odd) length. By Corollary~\ref{cor:Plain} such groups are presented by  finite convergent length-reducing rewriting systems.

\end{remark}

 \section{ Embedded circuits in geodetic graphs}\label{sec:GraphTheorem}
 
In this section we prove   Theorem~\ref{ThmB:Shortcircuits}. We start with the following  lemma.

\begin{lemma}\label{lem:level3}
Let $\Gamma$ be a simple geodetic graph.
If $\rho$ is an  embedded circuit of diameter exceeding two and that has minimal length among all such embedded circuits in $\Gamma$, then  $\rho$ contains a geodesic sub-path of length three. \end{lemma}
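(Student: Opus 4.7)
Plan: I proceed by contradiction. Assume $\rho = v_0 v_1 \cdots v_{n-1} v_0$ has no geodesic sub-path of length three, so $d(v_i, v_{i+3}) \le 2$ for every index $i$ modulo $n$. I will derive a contradiction with either the hypothesis that $\mathrm{diam}(\rho) > 2$ or the minimality of the length of $\rho$.

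First I dispatch small circuits. If $n \leq 7$, every pair of vertices on $\rho$ is at cycle-distance at most three; graph-distance is then at most two for every pair (by the standing assumption for the cycle-distance-three pairs and by the triangle inequality otherwise), contradicting $\mathrm{diam}(\rho) > 2$. Hence $n \geq 8$.

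Next I pick a convenient distance-three pair. Among all $(v_a,v_b)$ on $\rho$ with $d(v_a,v_b) \geq 3$, I choose one minimising the length $\ell$ of the shorter arc of $\rho$ between them, and by an index shift I assume $v_a = v_0$ and $v_b = v_\ell$. The assumption forces $\ell \geq 4$, and combining minimality of $\ell$ (which gives $d(v_0, v_{\ell-1}) \leq 2$) with the triangle inequality pins down $d(v_0, v_\ell) = 3$. Write $\sigma = v_0\,u_1\,u_2\,v_\ell$ for the unique geodesic from $v_0$ to $v_\ell$.

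I then use the minimality of $\rho$ to forbid certain chords. An edge $v_0 v_c$ with $c \notin \{0,1,n-1\}$ splits $\rho$ into two embedded subcircuits, one of which is strictly shorter than $\rho$ and still contains the distance-three pair $(v_0,v_\ell)$, contradicting minimality; the analogous reasoning rules out chords at $v_\ell$ and any chord joining an interior vertex of the shorter arc to an interior vertex of the longer arc that leaves a smaller embedded subcircuit still hosting the pair $(v_0,v_\ell)$. Consequently, if $u_1 \in V(\rho)$ then $u_1 \in \{v_1, v_{n-1}\}$, and if $u_2 \in V(\rho)$ then $u_2 \in \{v_{\ell-1}, v_{\ell+1}\}$.

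Finally I case-split on $(u_1,u_2)$. If both are off $\rho$, then $\sigma$ and the shorter arc share only their endpoints, so their union is an embedded circuit of length $\ell + 3 < n$ containing $(v_0,v_\ell)$, contradicting minimality. For each remaining case (one or both of $u_1, u_2$ on $\rho$), I combine the resulting chord with a secondary midpoint analysis: the graph-distances $d(v_0, v_{\ell-1})$ and $d(v_1, v_\ell)$ are each at most two by minimality of $\ell$, their midpoints are constrained by the forbidden-chord list of the previous paragraph, and any admissible midpoint either produces a second length-three path from $v_0$ to $v_\ell$ (violating uniqueness of geodesics in $\Gamma$) or yields a shorter embedded circuit of diameter exceeding two (violating minimality of $\rho$). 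Either way we contradict the standing hypotheses.

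I expect the main technical obstacle to be this final case analysis, specifically in configurations where $\sigma$ crosses $\rho$ at internal vertices that do not separate $v_0$ from $v_\ell$ (for example $u_1 = v_1$ and $u_2 = v_{\ell+1}$). There the chord supplied by $\sigma$ alone does not immediately produce a shorter embedded circuit hosting the distance-three pair, and one must use the uniqueness of geodesics in the geodetic graph together with a second shortcut (the midpoint of $v_0, v_{\ell-1}$, say) to push through the contradiction.
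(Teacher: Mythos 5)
Your route is genuinely different from the paper's, and it does work --- but as submitted the decisive final step is left as an acknowledged obstacle, so let me first point out that the case analysis you fear actually collapses. With $d(v_0,v_\ell)=3$ and $\ell\ge 4$ fixed as in your setup, minimality of $\ell$ gives $d(v_0,v_{\ell-1})\le 2$ and $d(v_1,v_\ell)\le 2$, while the triangle inequality gives $\ge 2$ for both; so both distances equal $2$. If $w$ is the midpoint of the geodesic $v_0,w,v_{\ell-1}$, then $v_0,w,v_{\ell-1},v_\ell$ is a path of length three (its vertices are distinct, since $w=v_\ell$ would force $d(v_0,v_\ell)=1$), hence it is \emph{the} geodesic $\sigma$ by geodeticity; therefore $u_2=v_{\ell-1}$. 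Symmetrically, the midpoint $w'$ of $v_1,v_\ell$ gives the length-three path $v_0,v_1,w',v_\ell$, so $u_1=v_1$. Thus $\sigma=v_0,v_1,v_{\ell-1},v_\ell$ unconditionally, and $v_1v_{\ell-1}$ is an edge of $\Gamma$; since $\ell\ge 4$ it is a chord of $\rho$, and the embedded circuit $v_1,v_{\ell-1},v_\ell,\dots,v_{n-1},v_0,v_1$ has length $n-\ell+3\le n-1$ and still contains the pair $(v_0,v_\ell)$ at distance three, contradicting the minimality of $\rho$. In particular your cases ``$u_1,u_2$ both off $\rho$'' and ``$u_2=v_{\ell+1}$'' are vacuous, and your forbidden-chord paragraph is not needed beyond these two triangle-inequality bounds.

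For comparison: the paper fixes one endpoint of a distance-three pair as a basepoint, stratifies $\Gamma$ into levels by distance from it, and walks around $\rho$, showing that $u_2$ and $v_2$ lie in level $2$ and that $u_3$ and $v_3$ cannot both lie in level $2$ (by surgering $\rho$ through the unique level-one neighbours of $u_3$ and $v_3$); geodeticity enters through the uniqueness of the level-one neighbour of a level-two vertex. You instead make an extremal choice of the distance-three pair (minimising the arc of $\rho$ between its endpoints) and analyse the connecting geodesic $\sigma$ directly, with geodeticity entering through the uniqueness of $\sigma$ itself. Both arguments ultimately contradict minimality by surgery on $\rho$. Once completed as above, your version is arguably shorter and avoids the paper's three-way case split on $u_1'$ and $v_1'$; the paper's level-set bookkeeping has the advantage of being reused directly in the subsequent, more delicate Lemma~\ref{lem:FormOfRho}.
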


\begin{proof}
Let $\rho$ be an   embedded circuit of diameter exceeding two and that has minimal length among all  embedded circuits of diameter exceeding two in $\Gamma$.  
Since $\rho$ has diameter at least three, there exist vertices $1$ and $x$ visited by $\rho$ such that $d(1, x) = 3$.  We choose a basepoint (the vertex $1$), an orientation of $\rho$, and label the vertices visited by $\rho$ in order
\[1, u_1, u_2, \dots, u_m = x = v_n, v_{n-1}, \dots, v_1, 1.\]
For each vertex $w \in \Gamma$, we say that $w$ is in \emph{level} $d(w, 1)$.  

Note that $m,n\geq 3$ since $\rho$ has diameter at least three.

 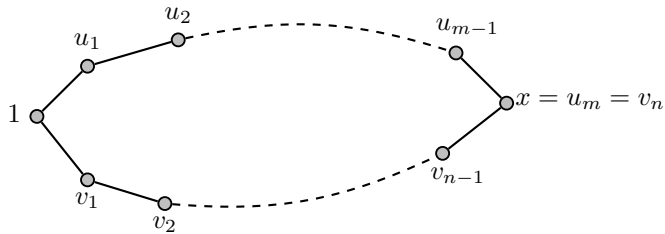
\begin{figure}[!ht]
	\centering

\begin{tikzpicture}[auto,node distance=.5cm,
    latent/.style={circle,draw,very thick,inner sep=0pt,minimum size=30mm,align=center},
    manifest/.style={rectangle,draw,very thick,inner sep=0pt,minimum width=45mm,minimum height=10mm},
    paths/.style={->, ultra thick, >=stealth'},
]

    \node[dot] [label={[xshift=-.3cm, yshift=-.3cm]$1$}](0) {};
    \node[dot] (1) [label={[xshift=0cm, yshift=-0cm]$u_1$}, above right= 1.5em and 1.5em of 0]  {};
        \node[dot] (2) [label={[xshift=0cm, yshift=0cm]$u_2$}, above right = .6em and 3em of 1]  {};
                \node[dot] (3) [label={[xshift=.15cm, yshift=0cm]$u_{m-1}$},  below  right = .1em and 10em of 2]  {};
                                \node[dot] (4) [label={[xshift=--1.1cm, yshift=-.3cm]$x=u_m=v_n$},below right = 1.5em and 1.5em of 3]  {};
    \node[dot] (11)  [label={[xshift=0cm, yshift=-.6cm]$v_1$}, below right = 2em and 1.5em of 0] {};
        \node[dot] (12)  [label={[xshift=0cm, yshift=-.6cm]$v_2$},below right = .5em and 2.5em of 11] {};
 \node[dot] (14)   [label={[xshift=.2cm, yshift=-.65cm]$v_{n-1}$}, below left= 1.5em and 2em of 4] {};

    \path[draw,thick]
    (0) edge node {} (1)
     (1) edge node {} (2)
               (3) edge node {} (4)
                         (4) edge node {} (14)
    (0) edge node {} (11)
      (11) edge node {} (12);
        
            \draw [thick,dashed] (2) to [bend left=15]  (3)
            (12) to [bend right=15]  (14);

\end{tikzpicture}

	\caption{The embedded circuit $\rho$ in Lemma~\ref{lem:level3}. }
	\label{fig:length3geod}
\end{figure}

\noindent 
\textit{Claim 1:} $u_2,v_2$ are in level 2. \\
 First we note that, since $\rho$ is an embedded circuit, the vertices $1, \dots u_{m-1}, v_1, \dots, v_{n-1}, x$ are distinct. Since $1$ and $u_1$ are distinct, $u_1$ is in level 1.  Suppose that $u_2$ is not in level 2.  Then it is either in level $0$ or $1$, but $u_2\neq 1$ so it must be in level $1$.
This implies that $u_2$ is adjacent to $1$, and omitting $u_1$ from $\rho$ yields a shorter embedded circuit of diameter exceeding two.  This contradicts the choice of 
$\rho$, and hence proves that $u_2$ is in level 2.

A symmetric argument shows that $v_2$ is in level 2.

\medskip

Since $\Gamma$ is geodetic, $u_1$ is the unique level-1 vertex adjacent to $u_2$.  It follows that $u_3$ is in level 2 or level 3.  Similarly, $v_3$ is in level 2 or level 3.  The result is proved if we can show that $u_3$ and $v_3$ cannot both be in level 2.

\smallskip
\noindent 
\textit{Claim 2:} At least one of $u_3,v_3$ is in level 3.\\
Suppose that $u_3$ and $v_3$ are both in level 2.  Let $u'_1$ be the unique vertex in level 1 that is adjacent to $u_3$; let $v'_1$ be the unique vertex in level 1 that is adjacent to $v_3$.  
If  $\rho$ does not visit $u'_1$, then replacing the subpath $1,u_1,u_2,u_3$ by the path $1,u_1',u_3$ yields a shorter embedded circuit of diameter at least three, contradicting our choice of $\rho$. Therefore 
$\rho$ visits $u'_1$.
If $u'_1 \neq v_1$, then either $1, v_1, \dots u_1',1$ or $1, u_1, \dots, u_1', 1$ is an embedded circuit of diameter at least 3, contradicting our choice of $\rho$. 
Thus 
 $u'_1 = v_1$.  By a symmetric argument, we also have $v_1'=u_1$, and we are now in the situation shown in Figure~\ref{fig:length3geod}.

 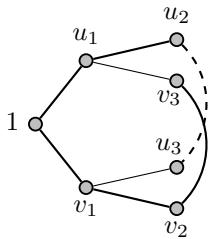
\begin{figure}[!ht]
	\centering

\begin{tikzpicture}[auto,node distance=.5cm,
    latent/.style={circle,draw,very thick,inner sep=0pt,minimum size=30mm,align=center},
    manifest/.style={rectangle,draw,very thick,inner sep=0pt,minimum width=45mm,minimum height=10mm},
    paths/.style={->, ultra thick, >=stealth'},
]

  \node[dot] [label={[xshift=-.3cm, yshift=-.3cm]$1$}](0) {};

    \node[dot] (1) [label={[xshift=0cm, yshift=0cm]$u_1$}, above right= 2em and 1.5em of 0]  {};
        \node[dot] (11) [label={[xshift=0cm, yshift=-.6cm]$v_1$}, below right = 2em and 1.5em of 0] {};
        \node[dot] (2) [label={[xshift=0cm, yshift=0cm]$u_2$}, above right = .4em and 3em of 1]  {};
              \node[dot] (12) [label={[xshift=0cm, yshift=-.6cm]$v_2$}, below right = .4em  and  3em  of 11] {};    
                \node[dot] (13) [label={[xshift=-.1cm, yshift=-.55cm]$v_3$},below = 1em of 2]  {};
            \node[dot] (3)  [label={[xshift=-.1cm, yshift=-.05cm]$u_3$}, above= 1em of 12] {};

    \path[draw]
  (1) edge node {} (13)
             (11) edge node {} (3);

    \path[draw,thick]
    (0) edge node {} (1)
      (0) edge node {} (11)
     (1) edge node {} (2)
      (11) edge node {} (12);

             \draw [thick,dashed] (2) to [bend left=45]  (3);
             \draw [thick]    (12) to [bend right=45]  (13)  ;
             
\end{tikzpicture}

	\caption{Case $u'_1 = v_1$ and $v'_1 = u_1$  in Lemma~\ref{lem:level3}.}
	\label{fig:length3geod}
\end{figure}

Let $\rho'$ be obtained from $\rho$ by replacing $1, u_1, u_2, u_3$ by $1, v_1, u_3$, and replacing $v_3, v_2, v_1, 1$ by $v_3, u_1, 1$.  Since $\rho'$ visits only vertices visited by $\rho$, and 1 is the only vertex visited twice, we know that $\rho'$ is an embedded circuit which is shorter than $\rho$.  Since the only vertices from $\rho$ omitted were in levels 1 and 2, we know that $\rho'$ still visits a vertex in level 3, and hence it still has diameter at least 3,  contradicting our choice of $\rho$. 
\end{proof}

We will make use of the following fact due to Stemple. 

\begin{lemma}[{{\cite[Theorem 3.3]{Stemple}}}]\label{lem:Stemple4}
If a geodetic graph contains an embedded circuit \[w_0, w_1, w_2, w_3, w_0\] of length four, then the induced subgraph on these vertices is a complete graph.
\end{lemma}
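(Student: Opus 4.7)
The plan is to exploit the uniqueness of geodesics between the two pairs of opposite vertices of the 4-circuit. Label the embedded circuit $w_0, w_1, w_2, w_3, w_0$, and observe that because the circuit is embedded, the four vertices $w_0, w_1, w_2, w_3$ are pairwise distinct.

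First I would focus on the diagonal pair $w_0, w_2$. The circuit provides two paths of length two between them, namely $w_0, w_1, w_2$ and $w_0, w_3, w_2$, and these paths are distinct because $w_1 \neq w_3$. If $d(w_0, w_2)$ were equal to $2$, then both of these paths would be geodesics, contradicting the fact that $\Gamma$ is geodetic. Since $w_0 \neq w_2$, the only remaining possibility is $d(w_0, w_2) = 1$, so $w_0$ and $w_2$ are adjacent in $\Gamma$.

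An identical argument applied to the other diagonal pair $w_1, w_3$, using the two length-two paths $w_1, w_0, w_3$ and $w_1, w_2, w_3$, shows that $w_1$ and $w_3$ are adjacent. Together with the four edges of the circuit itself, this gives all $\binom{4}{2} = 6$ possible edges among $\{w_0, w_1, w_2, w_3\}$, so the induced subgraph is complete.

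There is no real obstacle here: the entire content of the lemma is the observation that in a geodetic graph, two distinct length-two paths between a common pair of vertices force those vertices to be adjacent. The only thing that needs care is checking that the two length-two paths on each diagonal are genuinely distinct, which follows immediately from the embeddedness of the circuit.
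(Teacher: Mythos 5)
Your proof is correct. The paper itself gives no proof of this lemma---it is quoted directly from Stemple's work as a citation---so there is nothing internal to compare against; you have supplied a complete, self-contained argument where the paper relies on an external reference. Your reasoning is the natural one: the embeddedness of the circuit guarantees the two length-two paths across each diagonal are distinct, geodecity then forbids the diagonal distance from being $2$, and since the endpoints are distinct the distance must be $1$. Both diagonals thus become edges and the induced subgraph is $K_4$.
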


Next we have the following technical result.

\begin{lemma}\label{lem:FormOfRho}
Let $\Gamma$ be a geodetic graph in which any \iec\ has at most five edges.
Suppose that $\rho$ is an embedded circuit in $\Gamma$ of diameter at least three, and $\rho$ has minimal length among all such embedded circuits.  Without loss of generality (using Lemma~\ref{lem:level3}), we may label the vertices of $\rho$ such that one traversal of $\rho$ reads
\[1=u_0, u_1, \dots, u_m=v_3, v_2, v_1, 1\] and $1, v_1, v_2, v_3$ is a geodesic subpath.   Then $m=5$, $d(1, u_1) = 1$, $d(1, u_2) = d(1, u_3) = 2$, $d(1, u_4) = 3$ and $d(u_3, v_1) = 1$. 
\end{lemma}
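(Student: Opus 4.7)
The approach will be a careful level-by-level analysis along $\rho$, writing $\ell(w) = d(1, w)$ for the level of~$w$. I will lean on three tools: (a) the key Lemma~\ref{lem:EP}, which turns two distinct length-$n$ geodesics from a common basepoint whose endpoints are adjacent into an \iec\ of length $2n+1$; (b) the $2$-broomlike property from Lemma~\ref{lem:IECbroomlike} (valid since $2(2)+1=5$); and (c) the minimality of $\rho$, which forbids any strictly shorter embedded circuit of diameter exceeding two. A silent workhorse is the fact that in a geodetic graph each level-$k$ vertex has a unique level-$(k-1)$ neighbour, namely its predecessor on the unique geodesic from $1$.

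First I would show $m\ge 5$. Since $d(1,v_3)=3$, the path $u_0,u_1,\dots,u_m$ forces $m\ge 3$; the case $m=3$ makes $1,u_1,u_2,u_3$ a second length-$3$ geodesic from $1$ to $v_3$, collapsing onto $1,v_1,v_2,v_3$ by uniqueness and contradicting that $\rho$ is embedded. For $m=4$: if $\ell(u_3)=3$, Lemma~\ref{lem:EP} applied to $1,u_1,u_2,u_3$ and $1,v_1,v_2,v_3$ (with $d(u_3,v_3)=1$) yields an \iec\ of length $7$, contradicting the hypothesis. If $\ell(u_3)=2$, let $x$ be the unique level-$1$ neighbour of $u_3$; the three subcases $x=u_1$, $x\notin\{u_1,v_1\}$, $x=v_1$ all collapse: the first two via shortcut circuits strictly shorter than $\rho$ and still of diameter at least three, and the last because $1,v_1,u_3,v_3$ would be a length-$3$ geodesic forcing $u_3=v_2$ by uniqueness of $1,v_1,v_2,v_3$. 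Hence $m\ge 5$.

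Next I establish $\ell(u_3)=2$ under $m\ge 5$. Supposing $\ell(u_3)=3$: if $u_3$ and $v_3$ were adjacent, the embedded circuit $1,u_1,u_2,u_3,v_3,v_2,v_1,1$ has length $7 < m+3$ and diameter at least three, contradicting minimality. When $u_3,v_3$ are non-adjacent, I examine $u_4$. Its level is $2$, $3$, or $4$, but the unique level-$2$ neighbour of $u_3$ is $u_2$ so $\ell(u_4)=2$ forces $u_4=u_2$, a contradiction. For the remaining levels I apply $2$-broomlikeness to the path $1,u_1,u_2,u_3,u_4$ to locate the geodesic from $1$ to $u_4$; each subcase yields either a chord producing a shorter embedded circuit of diameter at least three, or a second pair of equal-length geodesics from $1$ with endpoints at distance one, giving an \iec\ of length $6$ or $7$ via Lemma~\ref{lem:EP}. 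This rules out $\ell(u_3)=3$, and $\ell(u_3)=1$ would shortcut $\rho$ at once, so $\ell(u_3)=2$.

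With $\ell(u_3)=2$ in hand, the unique level-$1$ neighbour of $u_3$ must be $v_1$, since choices $u_1$ or a new vertex each shortcut $\rho$ to a strictly shorter embedded circuit of diameter at least three. Hence $d(u_3,v_1)=1$. Turning to $u_4$, shortcut arguments rule out $\ell(u_4)\in\{0,1,2\}$, so $\ell(u_4)=3$. To pin down $m=5$, I would show that any further $u_i$ for $i\ge 6$ combined with the structure built so far produces, via Lemma~\ref{lem:EP} applied to geodesics through $u_3,v_1$ and $u_4$ (or via $2$-broomlikeness), either an \iec\ of length at least six or a shorter embedded circuit of diameter at least three. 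The principal obstacle is the middle step, ruling out $\ell(u_3)=3$ when $m\ge 5$: Lemma~\ref{lem:EP} does not apply directly between the non-adjacent $u_3$ and $v_3$, so one must propagate information via $2$-broomlikeness and case-analyse the levels and predecessors of each subsequent $u_i$, producing in each case a bespoke shortcut or a forbidden \iec.
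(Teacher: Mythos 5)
Your overall strategy coincides with the paper's: a level-by-level walk along $\rho$ driven by the $2$-broomlike property, Lemma~\ref{lem:EP}, and minimality shortcuts, and your opening reduction to $m\ge 5$ is correct. The trouble is that at the two hardest points the sketch substitutes a description of the kind of contradiction you expect for the argument that actually produces it, and in both places the named mechanism is not the one that works. First, in ruling out $\ell(u_3)=3$: applying $2$-broomlikeness at the first non-geodesic prefix $1,u_1,\dots,u_i$ yields a replacement vertex $x$ at level $i-2$, and the decisive case is $x$ lying on $\rho$ itself, specifically $x=v_2$. That case is not disposed of by ``a chord producing a shorter embedded circuit,'' nor by Lemma~\ref{lem:EP} (which only ever produces \iec s of odd length $2n+1$, so ``an \iec\ of length $6$'' cannot arise from it); it dies because $1,u_1,\dots,u_{i-3},x$ would be a second geodesic from $1$ to $v_2$, contradicting uniqueness of $1,v_1,v_2$. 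Your case list never confronts the possibility that $x$ is a vertex of $\rho$, which is exactly the configuration that, in the complementary situation $i=3$, delivers the conclusion $d(u_3,v_1)=1$.

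Second, ``shortcut arguments rule out $\ell(u_4)\in\{0,1,2\}$'' is too quick for $\ell(u_4)=2$. Writing $p$ for the unique level-$1$ neighbour of $u_4$, the subcase $p=v_1$ admits no immediate shortcut: the obvious shorter circuit $v_1,u_4,u_5,\dots,v_3,v_2,v_1$ only certifies diameter at least $d(v_1,v_3)=2$, so it does not contradict the minimality of $\rho$. One must first pin down $d(u_1,u_4)=2$ (excluding $1$ by a genuine shortcut and $3$ by geodecity against the two length-$3$ paths $u_1,u_2,u_3,u_4$ and $u_1,1,v_1,u_4$), extract a vertex $t$ adjacent to both $u_1$ and $u_4$, verify $t$ avoids the circuit, and only then reroute. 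The endgame is likewise underspecified: before the final broomlike application to $v_1,u_3,u_4,u_5$ can force $q=v_2$ and $u_5=v_3$, you must separately show $u_5$ is at level $3$, and excluding level $4$ requires another broomlike pass along the rerouted geodesic $1,v_1,u_3,u_4,u_5,\dots$ with the same ``where can the replacement vertex lie'' analysis. As it stands the proposal is a correct outline of the paper's route with the load-bearing cases left unproved.
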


 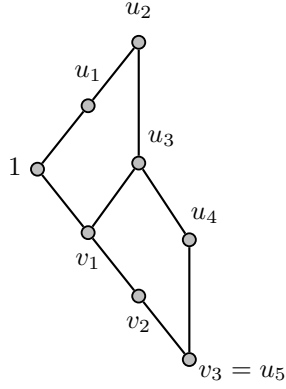
\begin{figure}[!ht]
	\centering

	\begin{tikzpicture}[auto,node distance=.5cm,
    latent/.style={circle,draw,very thick,inner sep=0pt,minimum size=30mm,align=center},
    manifest/.style={rectangle,draw,very thick,inner sep=0pt,minimum width=45mm,minimum height=10mm},
    paths/.style={->, ultra thick, >=stealth'},
]

   \node[dot] [label={[xshift=-.3cm, yshift=-.3cm]$1$}](0) {};
 
    \node[dot] (1)  [label={[xshift=0cm, yshift=.1cm]$u_1$}, above right= 2em and 1.5em of 0]  {};
        \node[dot] (2)  [label={[xshift=0cm, yshift=.1cm]$u_2$}, above right = 2em and 1.5em of 1]  {};
                \node[dot] (3)  [label={[xshift=.3cm, yshift=0cm]$u_3$}, below = 4em of 2]  {};
                        \node[dot] (4)  [label={[xshift=.2cm, yshift=0cm]$u_4$}, below right = 2.5em and 1.5em of 3]  {};
    \node[dot] (11)  [label={[xshift=0cm, yshift=-.7cm]$v_1$}, below right = 2em and 1.5em of 0] {};
        \node[dot] (12)  [label={[xshift=0cm, yshift=-.7cm]$v_2$}, below right = 2em and 1.5em of 11] {};
            \node[dot] (13)  [label={[xshift=--.7cm, yshift=-.5cm]$v_3=u_5$}, below right = 2em and 1.5em of 12] {};

    \path[draw,thick]
    (0) edge node {} (1)
     (1) edge node {} (2)
          (2) edge node {} (3)
               (3) edge node {} (4)
                         (4) edge node {} (13)
    (0) edge node {} (11)
      (11) edge node {} (12)
        (12) edge node {} (13)
             (11) edge node {} (3);
    
\end{tikzpicture}

	\caption{Conclusion of Lemma~\ref{lem:FormOfRho}.}
	\label{fig:conclusionFormOfRho}
\end{figure}

\begin{proof}
As before,  we say that a vertex $w$ is in \emph{level} $d(w, 1)$.  
Following 
 the proof of Lemma~\ref{lem:level3}, we have  that $u_1,v_1$ are in level 1, $u_2,v_2$ are in level 2, and $u_3,v_3$ are in level 2 or 3 but not both in level 2. We assumed without loss of generality in the hypothesis of this lemma that  $v_3$ that is in level 3.

\smallskip
\noindent 
\textit{Claim 1:}  $u_3$ is in level 2 and $d(v_1, u_3) =1$.  \\
Since $1,v_1,v_2,v_3$ is a geodesic and $\Gamma$ is geodetic, we  have $m\geq 4$, and  the path
 $1, u_1, \dots, u_m$ is not a geodesic.
 So, there exists a unique $i\leq m$ such that
 $1, u_1, u_2, \dots, u_{i-1}$ is geodesic and  $1, u_1, u_2, \dots, u_i$ is not a geodesic.
 It follows that $u_{i-1}$ and $u_i$ are both in level $i-1$.  Since $u_1$ is in level 1 and $u_2$ is in level 2, we know that $i \geq 3$.

 By 
 Lemma~\ref{lem:IECbroomlike},
 since $1,u_1,\dots, u_{i-1}$ is geodesic and $1,u_1, \dots, 
 u_{i-1}, u_i$ is not geodesic then by the $2$-broomlike property there is either  $u_{i-2}$ to $u_i$ are adjacent, or there is a geodesic from $u_{i-3}$ to $u_i$ of length 2. 
If $u_{i-2}$ and $u_i$ are adjacent, we could omit the vertex $u_{i-1}$ from the path $\rho$ and still have an embedded circuit that visits both 1 and $u_m=v_3$ --- a contradiction to our choice of $\rho$. Thus there is a geodesic from $u_{i-3}$ to $u_i$ of length 2.   It follows that there is a vertex $x\neq u_j$ for $0\leq j\leq i$ such that 
  $1,\dots, u_{i-3},x,u_i$ is a geodesic. See  Figure~\ref{fig:2-broomlike}.

  \begin{figure}[!ht]
	\centering

\begin{tikzpicture}[auto,node distance=.5cm,
    latent/.style={circle,draw,very thick,inner sep=0pt,minimum size=30mm,align=center},
    manifest/.style={rectangle,draw,very thick,inner sep=0pt,minimum width=45mm,minimum height=10mm},
    paths/.style={->, ultra thick, >=stealth'},
]

   \node[dot] [label={[xshift=-.3cm, yshift=-.3cm]$1$}](0) {};
    \node[dot] (1) [label={[xshift=0cm, yshift=0cm]$u_1$}, above right= 2em and 2em of 0]  {};
        \node[dot] (2) [label={[xshift=0cm, yshift=0cm]$u_{i-3}$},above right = 3em and 8em of 1]  {};
                \node[dot] (3) [label={[xshift=0cm, yshift=-.65cm]$x$}, below right = 2em and 3.5em of 2]  {};
                        \node[dot] (4) [label={[xshift=0cm, yshift=-.7cm]$u_{i}$},right = 2.5em of 3]  {};
                \node[dot] (5) [label={[xshift=0cm, yshift=0cm]$u_{i-2}$},right = 3em of 2]  {};
                        \node[dot] (6) [label={[xshift=0cm, yshift=0cm]$u_{i-1}$}, right = 2.5em of 5]  {};

    \path[draw,thick]
    (0) edge node {} (1)
          (2) edge node {} (3)
               (3) edge node {} (4)
                         (2) edge node {} (5)
    (5) edge node {} (6)
     (6) edge node {} (4);

             \draw [thick,dashed] (1) to [bend left=20]  (2);
      
\end{tikzpicture}
	\caption{Using the $2$-broomlike property in the proof of Claim 1 of  Lemma~\ref{lem:FormOfRho}.}
	\label{fig:2-broomlike}
\end{figure}
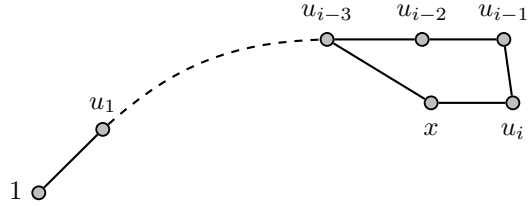

 Observe that replacing in $\rho$ the subpath $u_{i-3}, u_{i-2}, u_{i-1}, u_i$ with $u_{i-3}, x, u_{i}$ yields a closed path $\rho'$ that visits both 1 and $u_m = v_3$.  The minimality of the length of $\rho$ implies that $\rho'$ is not an embedded circuit; that is, $x$ must be  equal to one of the vertices of $\rho'$.  If $x = u_j$ for some $i+1\leq j\leq m$, then we can remove a cycle from $\rho'$ and construct a shorter embedded circuit that visits both 1 and $v_3$. It follows that either $x = v_1$ or $x = v_2$.  
 Suppose $x=v_2$. Then $x$ is in level 2 and so 
 $1, u_1, x$ is a geodesic, as is $1,v_1,x$, and since $u_1 \neq v_1$ we contradict that $\Gamma$ is geodetic.
 Hence we have that $x = v_1$.  This means that $i = 3$ and $u_3$ is in level 2, and $d(v_1, u_3) =1$, as required.

\smallskip
\noindent 
\textit{Claim 2:} $m\geq 5$.\\
We know that $m\geq 4$. If $m=4$ then $v_1,u_3,u_4$ and $v_1,v_2,v_3$ are two different geodesics between the same endpoints, contradicting geodecity. Thus $m\geq 5$.

\smallskip
\noindent 
\textit{Claim 3:} $u_4$ is in level 3. \\
 Since $\Gamma$ is geodetic, $v_1$ is the only vertex in level 1 that is adjacent to $u_3$.  Since $m\geq 4$ and  $u_4 \neq v_1$ (because $\rho$ is an embedded circuit), we have that $u_4$ is in level 2 or level 3.  Suppose that $u_4$ is in level 2, and let $p$ denote the unique vertex in level 1 that is adjacent to $u_4$.

  \begin{figure}[!ht]
	\centering

	\begin{tikzpicture}[auto,node distance=.5cm,
    latent/.style={circle,draw,very thick,inner sep=0pt,minimum size=30mm,align=center},
    manifest/.style={rectangle,draw,very thick,inner sep=0pt,minimum width=45mm,minimum height=10mm},
    paths/.style={->, ultra thick, >=stealth'},
]

   \node[dot] [label={[xshift=-.3cm, yshift=-.3cm]$1$}](0) {};
 
    \node[dot] (1)  [label={[xshift=0cm, yshift=.1cm]$u_1$}, above right= 2em and 1.5em of 0]  {};
        \node[dot] (2)  [label={[xshift=0cm, yshift=.1cm]$u_2$}, above right = 2em and 1.5em of 1]  {};
                \node[dot] (3)  [label={[xshift=.3cm, yshift=0cm]$u_3$}, below = 4em of 2]  {};
                        \node[dot] (4)  [label={[xshift=.2cm, yshift=0cm]$u_4$}, below right = 2.5em and 1.5em of 3]  {};
    \node[dot] (11)  [label={[xshift=0cm, yshift=-.65cm]$v_1$}, below right = 2em and 1.5em of 0] {};
        \node[dot] (12)  [label={[xshift=0cm, yshift=-.65cm]$v_2$}, below right = 2em and 1.5em of 11] {};
            \node[dot] (13)  [label={[xshift=0cm, yshift=-.65cm]$v_3$}, below right = 2em and 1.5em of 12] {};

    \path[draw,thick]
    (0) edge node {} (1)
     (1) edge node {} (2)
          (2) edge node {} (3)
               (3) edge node {} (4)                        
    (0) edge node {} (11)
      (11) edge node {} (12)
        (12) edge node {} (13)
             (11) edge node {} (3);

             \draw [thick,dashed] (4) to [bend left=90]  (13); 
\end{tikzpicture}

	\caption{Claim 3 in the proof of Lemma~\ref{lem:FormOfRho}: assume $u_4$ is in level 2.}
	\label{fig:claim2}
\end{figure}
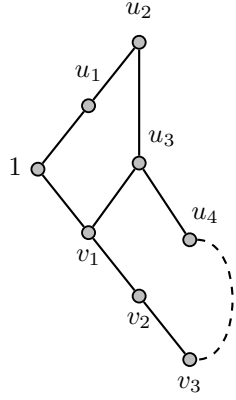

  Now either $p$ is a vertex of $\rho$, or not. If it does not lie on $\rho$ then we can replace the subpath $1,u_1,u_2,u_3,u_4$ by $1,p,u_4$ and obtain a shorter embedded circuit which visits $1$ and $v_3$, contradicting the minimality of $\rho$.
 
  Therefore $p$ is a vertex of $\rho$.

\smallskip
\noindent 
\textit{Case 1:}  $p = v_1$.  \\
The path
$1, u_1, u_2, u_3, u_4, v_1, 1$
is an embedded circuit of length 6. Call this path $\tau$.
Since $u_1,u_4$ are distinct we have $1\leq  d(u_1,u_4)\leq 3$.
The paths $u_1,u_2,u_3,u_4$ and $u_1, 1, v_1, u_4$ are both length 3, so 
$d(u_1,u_4)\neq 3$ or the graph is not geodetic.
If $d(u_1,u_4)=1$ then we can replace in $\rho$ the path $u_1,u_2,u_3,u_4$ by $u_1,u_4$ and find a shorter embedded circuit of diameter exceeding 2. Thus $d(u_1,u_4)=2$.

It follows that there must be a vertex $t$ that is not visited by $\tau$ and is adjacent to both $u_1$ and $u_4$.  Since $t$ does not lie on $\tau$, $t\neq v_1$, and since $u_4$ is in level 2, $t$ is in level 2 (if it were in level 1 we would have two geodesics to $u_4$ contradicting geodecity).  Since $t$ is adjacent to $u_1$ and $v_1\neq u_1$, if  $t =v_2$ we would have two geodesics to $v_2$, thus $t\neq v_2$.  It follows that by replacing in $\rho$ the subpath $1, u_1, u_2, u_3, u_4$ by the path $1, u_1, t, u_4$, and removing a subpath that is a cycle if necessary, we may construct a shorter embedded circuit that visits both 1 and $v_3$.  This contradiction proves that this case is impossible.

\smallskip
\noindent 
\textit{Case 2:} $p = u_1$. \\
Omitting $u_2$ and $u_3$ from $\rho$ would yield a shorter embedded circuit that still visits 1 and $v_3$. This contradiction proves that this case is impossible.

\smallskip
\noindent 
\textit{Case 3:}
 $u_1 \neq p \neq v_1$.  \\
 In this case  $p=u_j$ for $5\leq j<m$ since $v_1,v_2,v_3$ are all spoken for (only $v_1$ is in level 1).
 
 Then the path $1,p=u_j,u_{j+1},\dots, u_m=v_3,v_2,v_1,1$ is an embedded circuit passing $1$ and $v_3$ so has diameter 3 and is shorter than $\rho$, a contradiction.

Since all cases are impossible, we conclude that $u_4$ is not at level 2.  Hence $u_4$ is at level 3.

\smallskip
\noindent 
\textit{Claim 4:}  $u_5$ is at level 3. \\
 Since $u_3$ is the unique vertex in level 2 adjacent to $u_4$, and $u_5 \neq u_3$, we have that $u_5$ is not in level 2.  
 
 Suppose that $u_5$ is in level 4, and so  $\alpha=1,v_1,u_3,u_4,u_5$  is a geodesic.
 Since $\alpha$ is geodesic and  $1, v_1, u_3, u_4,u_5,\dots, u_m$ is not a geodesic, there exists a unique integer $i\geq 5$ so that $1, v_1, u_3, u_4,u_5,\dots, u_i$ is geodesic and $1, v_1, u_3, u_4,u_5, \dots, u_i,u_{i+1}$  is not geodesic, and  $u_{i}$ and $u_{i+1}$ are both in level $i-1$.

If $u_{i-1},u_i,u_{i+1}$ is not geodesic, omitting $u_i$ from  $\rho$ gives a shorter 
isometrically embedded circuit visiting $1$ and $v_3$, contradiction.
So $u_{i-1},u_i,u_{i+1}$ is geodesic.
 By 
 Lemma~\ref{lem:IECbroomlike}, 
 we must have 
 $u_{i-2},u_{i-1},u_i,u_{i+1}$ is not geodesic and there is a geodesic path $u_{i-2},z,u_{i+1}$
 where 
  $u_{i-2} \neq z \neq u_{i-1}$. 
  
 Note that  by construction $z$ is in level $i-2\geq 3$, so $z$ cannot equal $v_1,v_2$. 
 
 If  $z= v_3=u_m$ then $1, u_1,\dots, u_{i-2}, z, v_2,v_1, 1$ is a shorter isometrically embedded circuit that visits $1$ and $v_3$, a  contradiction. Also note that $z\neq u_6$ since $i\geq 5$ and $u_{i+1}\neq z$.

It follows that if $z=u_j$ then  $6<j\leq m-1$, and replacing the subpath  $u_{i-2},u_{i-1},u_i,u_{i+1}$ by $u_{i-2},z,u_{i+1}$ and possibly removing a cycle, we get a shorter embedded circuit than $\rho$ that visits $1$ and $v_3=u_m$.

 This shows that $z$ is not a vertex of $\rho$. Then replacing $u_{i-2},u_{i-1},u_i,u_{i+1}$ by $u_{i-2},z,u_{i+1}$ again gives a 
 shorter embedded  circuit than $\rho$ that visits $1$ and $v_3$.

  This contradiction proves that $u_5$ is in level 3.

\smallskip
\noindent 
\textit{Claim 5:}  $m=5$.\\
We note that $u_5$ is not adjacent to $u_2$ or $u_3$, otherwise we could omit $u_3,u_4$ or $u_4$ respectively from $\rho$ and have a shorter embedded circuit that visits both 1 and $v_3$.  Since $u_5$ is in level 3, we have  $v_1, u_3, u_4$ is a geodesic and $v_1, u_3, u_4, u_5$ is not geodesic, and $u_3,u_5$ is not an edge so 
 Lemma~\ref{lem:IECbroomlike} implies  there exists a vertex $q$ adjacent to both $v_1$ and $u_5$ such that $u_2 \neq q \neq u_3$.  Therefore $1, u_1, u_2, u_3, u_4, u_5, q, v_1, 1$ is an embedded circuit visiting $1$ and a vertex at level 3, so 
by  the minimality of  $\rho$ we must have  $q=v_2$ and $ u_5=v_3$. 
 \end{proof}

We can now prove
Theorem~\ref{ThmB:Shortcircuits}.

\begin{proof}[Proof of Theorem~\ref{ThmB:Shortcircuits}]
Suppose that there exists in $\Gamma$ an embedded circuit of diameter exceeding two.  By Lemma \ref{lem:FormOfRho}, there exists an embedded circuit $\rho$  labeled \[1,u_1,u_2,u_3,u_4,v_3,v_2,v_1,1\] with
$u_1$ at level 1, $u_2,u_3$ at level 2, $u_4$ at level 3 and $d(u_3,v_1)=1$, and $1,v_1,v_2,v_3$ is geodesic, as illustrated in Figure~\ref{fig:conclusionFormOfRho}.
Let $\rho'$ be the embedded circuit that begins at $v_3$ and visits the same vertices as $\rho$, but in reverse order.  That is, $\rho'$ visits vertices in the following order
\[v_3, u_4, u_3, u_2, u_1, 1, v_1, v_2, v_3.\]  Now $\rho'$ is also a minimal length embedded circuit with diameter exceeding two, so  Lemma \ref{lem:FormOfRho} applies to $\rho'$ as well (with $u_2$ playing the role of $u_3$ and $v_2$ the role of $v_1$), which gives that $d(u_2, v_2) = 1$.

  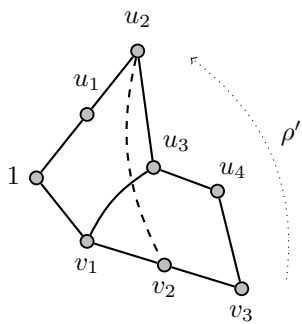
\begin{figure}[!ht]
	\centering
	\begin{tikzpicture}[auto,node distance=.5cm,
    latent/.style={circle,draw,very thick,inner sep=0pt,minimum size=30mm,align=center},
    manifest/.style={rectangle,draw,very thick,inner sep=0pt,minimum width=45mm,minimum height=10mm},
    paths/.style={->, ultra thick, >=stealth'},
]

   \node[dot] [label={[xshift=-.3cm, yshift=-.3cm]$1$}](0) {};
      \node[dot] (1)  [label={[xshift=0cm, yshift=.1cm]$u_1$}, above right= 2em and 1.5em of 0]  {};
        \node[dot] (2)  [label={[xshift=0cm, yshift=.1cm]$u_2$}, above right = 2em and 1.5em of 1]  {};
            \node[dot] (11)  [label={[xshift=0cm, yshift=-.65cm]$v_1$}, below right = 2em and 1.5em of 0] {};

                \node[dot] (3) [label={[xshift=.3cm, yshift=0cm]$u_3$},below right = 4em and .2em of 2]  {};
                          \node[dot] (4)  [label={[xshift=.2cm, yshift=-.05cm]$u_4$}, below right = .5em and 2em of 3]  {};

        \node[dot] (12)  [label={[xshift=0cm, yshift=-.65cm]$v_2$}, below right = .5em and 2.5em of 11] {};
            \node[dot] (13)   [label={[xshift=0cm, yshift=-.65cm]$v_3$}, below right = .5em and 2.5em of 12] {};

    \path[draw,thick]
    (0) edge node {} (1)
     (1) edge node {} (2)
          (2) edge node {} (3)
               (3) edge node {} (4)
                         (4) edge node {} (13)
    (0) edge node {} (11)
      (11) edge node {} (12)
        (12) edge node {} (13);

             \draw [thick,dashed] (2) to [bend right=20]  (12);
                  \draw [thick] (11) to [bend left=15]  (3);

\node (e1)   [ right = 1em  of 13] {};
\node (e2)   [ right = 1em  of 3] {};                    
\node (e3)   [ right = 1em  of 2] {};    

        \draw [->,dotted] (e1) to [bend right=30]  node [above right]  (e2) {$\rho'$} (e3);

\end{tikzpicture}

	\caption{The path $\rho'$ which starts at $v_3$ and runs in the reverse direction to $\rho$ in the proof of Theorem~\ref{ThmB:Shortcircuits}.}
	\label{fig:TheoremB}
\end{figure}

It follows that $u_2, v_2, v_1, u_3, u_2$ is an embedded circuit of length 4.  By 
Lemma~\ref{lem:Stemple4}, we must have that $u_2$ and $v_1$ are adjacent.  This contradicts the fact that $u_1$ is the unique level-1 vertex adjacent to $u_2$.
This contradiction proves that there are no embedded circuits in $\Gamma$ with diameter exceeding two. 
\end{proof}

\section{Plain groups, blocks and embedded circuits}\label{sec:plain}

Bass-Serre theory \cite{KPS1973,Trees} tells us that a group $G$ is plain if and only if $G$ acts
on a locally-finite tree, with a compact quotient, finite vertex stabilisers, trivial edges stabilisers and no edge inversions. See for example \cite[Theorem 13]{Trees}.

 Another useful characterisation of plain groups then follows from 
the {block-cut tree} associated to the graph, described in Section~\ref{sec:GraphTheory}.

For a finite set of vertices $S$ in a graph  $\Gamma$, the diameter of $S$ is the maximum distance in $\Gamma$ between any pair of vertices in $S$.
 Haring-Smith \cite{HS83} proved the following result in 1983.  We provide a short proof that uses Bass-Serre theory and the block-cut tree.

\begin{theorem}[A characterisation of plain groups]\label{thm:PlainAndTwoBlocks}
For a group $G$ and a positive integer $s$, the following are equivalent:
\begin{enumerate}

        \item $G$ admits a finite generating set $\Sigma$ such that, in the associated undirected Cayley graph $\Gamma(G,\Sigma)$,  the diameter of any embedded circuit is at most $s$. 
    
       \item $G$ admits a finite generating set $\Sigma$ such that, in the associated undirected Cayley graph $\Gamma(G,\Sigma)$,  the diameter of any block is at most $s$. 
     
    \item $G$ is a plain group.
\end{enumerate}
\end{theorem}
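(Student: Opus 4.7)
The plan is to establish (1)$\Leftrightarrow$(2) directly from Lemma~\ref{lem:Whitman}, then (3)$\Rightarrow$(2) using the explicit construction of Corollary~\ref{cor:Plain}, and finally the main direction (2)$\Rightarrow$(3) by applying Bass--Serre theory to the block-cut tree of the Cayley graph.

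For (1)$\Leftrightarrow$(2), the argument is routine. Every embedded circuit lies inside a single block of $\Gamma$, since any two of its vertices are visited by the circuit itself and so lie in a common block by Lemma~\ref{lem:Whitman}; the diameter of the circuit is therefore bounded by the diameter of its block, giving (2)$\Rightarrow$(1). Conversely, for any two vertices $u,v$ of a block $B$, Lemma~\ref{lem:Whitman} produces an embedded circuit visiting both, whose diameter bounds $d(u,v)$, giving (1)$\Rightarrow$(2). For (3)$\Rightarrow$(2), I would take the generating set from Corollary~\ref{cor:Plain}: for $G = G_1 \ast \cdots \ast G_n$ use $G_i \setminus \{e_G\}$ for each finite factor and $\{a_i, a_i^{-1}\}$ for each infinite cyclic factor. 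The standard tree-of-factors description of the Cayley graph of a free product identifies each block of $\Gamma(G,\Sigma)$ as either a complete graph on the vertices of one of the finite factors or a single edge labelled by a free generator; each block therefore has diameter $1$, and any $s \geq 1$ works.

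The main direction is (2)$\Rightarrow$(3). Because $\Sigma$ is finite, $\Gamma$ is locally finite; and because every block of $\Gamma$ has diameter at most $s$ in the metric of $\Gamma$, every block is contained in the ball of radius $s$ about any of its vertices and is hence finite. The group $G$ acts on $\Gamma$ by left-multiplication, freely and transitively on vertices and by graph automorphisms; this preserves the block structure and induces an action on the tree $T = T(\Gamma)$. I would then verify four ingredients for Bass--Serre theory. First, type I vertex stabilisers are trivial because $G$ acts freely on itself. Second, type II vertex stabilisers are finite: the setwise stabiliser of a finite block $B$ has size at most $|B|$ by orbit--stabiliser, since pointwise stabilisers are trivial. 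Third, edge stabilisers of $T$ are trivial, as each is contained in a type I stabiliser. Fourth, $T/G$ is a finite graph, with a single orbit of type I vertices and at most $|\Sigma \cup \Sigma^{-1}|$ orbits of type II vertices, bounded by the number of blocks through $e_G$, which is at most the degree of $e_G$ in $\Gamma$. Bass--Serre theory then presents $G$ as the fundamental group of a finite graph of groups with finite vertex groups and trivial edge groups, which is a free product of finitely many finite groups with a finitely generated free group, that is, a plain group.

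The step I expect to demand the most care is the final Bass--Serre assembly: in particular, identifying the fundamental group of a finite graph of groups with trivial edge groups and finite vertex groups as a free product of the non-trivial vertex groups with a free group of rank equal to the first Betti number of the underlying graph. The remaining ingredients --- finiteness of blocks, the classification of vertex and edge stabilisers in $T$, and cocompactness of the action on $T$ --- all follow directly from the hypothesis on $\Sigma$ and the block-cut tree construction.
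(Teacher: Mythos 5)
Your proposal is correct and follows essentially the same route as the paper: (1)$\Leftrightarrow$(2) via Lemma~\ref{lem:Whitman}, (3)$\Rightarrow$(2) via the explicit generating set of Corollary~\ref{cor:Plain} whose blocks have diameter one, and (2)$\Rightarrow$(3) via the induced geometric action on the block-cut tree with trivial edge stabilisers and finite vertex stabilisers, then Bass--Serre theory. Your write-up is in fact slightly more detailed than the paper's (e.g.\ in deducing finiteness of blocks from local finiteness and bounded diameter, and in bounding the number of type II orbits), but the argument is the same.
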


\begin{proof}
$1.\Leftrightarrow 2.$:  Follows immediately from  Lemma~\ref{lem:Whitman}.

$3.\Rightarrow 2.$: 
Suppose that $G$ is a plain group.  Then $G$ is a free product of $m$  finite groups $G_1, \dots, G_m$ and $n$ copies of the infinite cyclic group $C_1, \dots, C_n$. Let $\Sigma$ be a set comprising each nontrivial element of each finite factor $G_i$, and one generator $a_i$ and its inverse $A_i$ for each infinite cyclic factor $C_i$. In the Cayley graph $\Gamma = \Gamma(G, \Sigma)$, the only blocks containing the identity element $e_G$ are the subgraphs induced by $\Gamma(G_i, G_i\setminus\{e_{G_i}\})$  for $1\leq i\leq m$ (and these are complete graphs), and subgraphs induced by $(e_G, a_i)$ for $1 \leq i \leq n$. Thus all blocks containing $e_G$ have diameter 1.  Since $\Gamma$ is vertex-transitive, all blocks in $\Gamma$ have diameter one (and hence all blocks in $\Gamma$ have diameter at most $s$).

$2.\Rightarrow 3.$: 
Suppose that $G$ admits a finite generating set $\Sigma$ such that in the associated Cayley graph $\Gamma=\Gamma(G,\Sigma)$ all blocks have  diameter at most  $s$.  Let $T$ denote the block-cut tree of $\Gamma$, as described in Section~\ref{sec:GraphTheory}.  The natural left-action of $G$ on $\Gamma$ induces a left-action of $G$ on $T$.  Since the action of $G$ on $\Gamma$ is vertex transitive, the action of $G$ on $T$ is transitive on the set of type I vertices and there are finitely many orbits of type II vertices.  It follows that the action of $G$ on $T$ is  cocompact.  In the action of $G$ on $\Gamma$, vertices have trivial stabilisers.  It follows that in the action of $G$ on $T$, type I vertices have trivial stabilisers, type II vertices have finite stabilisers (because blocks in $\Gamma$ comprise finitely many vertices), edges are not inverted (each edge includes a type I and type II vertex which cannot be interchanged) and edge stabilisers are trivial.   Since $G$ acts  on $T$, a locally-finite tree, with finite vertex stabilisers, trivial edges stabilisers and no edge inversions, by \cite[Theorem 13]{Trees}  
 $G$ is a plain group.
\end{proof}

If a rewriting system $(\Sigma, T)$ presents a group $G$, then properties of the rewriting system determine properties of the Cayley graph $\Gamma = \Gamma(G, \Sigma)$.

\begin{lemma}\label{lem:RewritingAndGamma}
Let $(\Sigma, T)$ be a  finite convergent length-reducing rewriting system such that $\Sigma = \Sigma^{-1}$ and $(\Sigma, T)$ presents a group $G$.  Let $\Gamma$ denote the undirected Cayley graph of $\Gamma$  with respect to $\Sigma$. Then 
\begin{enumerate}
\item $\Gamma$ is geodetic;
\item  If $u_0, u_1,\dots, u_{m-1},u_m=u_0$ 
is an \iec\ in $\Gamma$ of length $m>2$, then
 $m = 2n+1$ for some positive integer $n$ and $(x_1 \dots x_{n+1}, x_m^{-1}\dots x_{n+2}^{-1} ) \in T$ where $x_i=_Gu_{i-1}^{-1}u_i\in\Sigma$ for $1\leq i\leq m$.

\end{enumerate}
\end{lemma}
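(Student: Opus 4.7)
The entire argument hinges on the elementary observation that any word $w \in \Sigma^{\ast}$ labelling a geodesic in $\Gamma$ must be irreducible: otherwise applying a rule to $w$ yields a strictly shorter word representing the same group element, contradicting the minimality of $|w|$. For part~(1), suppose two distinct geodesic paths joined the same pair of vertices. Translating by a group element (using the vertex-transitive left action of $G$ on $\Gamma$), I may assume one endpoint is $e_G$, so each geodesic is determined by its label. The two labels are distinct and both irreducible, yet represent the same group element; convergence forces them to share a unique normal form, which each already is, giving the contradiction.

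For part~(2), if $m$ were even then $u_0$ and $u_{m/2}$ would be joined by two distinct length-$m/2$ geodesics, namely the two halves of the embedded circuit, contradicting~(1). Hence $m = 2n+1$ for some positive integer $n$. Set $w_+ = x_1 \cdots x_{n+1}$ and $w_- = x_m^{-1} \cdots x_{n+2}^{-1}$, the two labels of the subpaths of the circuit from $u_0$ to $u_{n+1}$, of lengths $n+1$ and $n$ respectively. Since the circuit is isometrically embedded, $d(u_0, u_{n+1}) = n$, so $w_-$ labels the unique geodesic (hence is irreducible) while $w_+$ does not. Furthermore, every factor $x_i\cdots x_j$ of $w_+$ with $(i,j)\ne(1,n+1)$ satisfies $j-i+1 \leq n$, and by the isometric embedding condition labels a geodesic subpath of the circuit, and so is irreducible.

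Now I extract the rule. Since $w_+ \overset{\ast}{\leftrightarrow} w_-$, $w_+ \neq w_-$, and normal forms are unique by convergence, $w_+$ cannot itself be irreducible; it therefore admits a one-step reduction. The factor of $w_+$ on which this reduction acts is the left-hand side of some rule in $T$, but every proper factor of $w_+$ is irreducible, so this left-hand side must equal $w_+$ itself. Thus $(w_+, r) \in T$ for some $r$ with $|r| < n+1$. As $r$ represents $u_0^{-1} u_{n+1}$, left-invariance of the Cayley metric gives $|r| \geq d(u_0, u_{n+1}) = n$, so $|r| = n$. Then $r$ is a geodesic label from $u_0$ to $u_{n+1}$ and by~(1) equals $w_-$, as required.

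The main subtlety is the implication ``irreducible $\Rightarrow$ geodesic'', invoked only when forcing $w_+$ to reduce; this direction genuinely requires convergence, whereas every other use of the geodesic/irreducible dictionary needs only the easy length-reducing direction.
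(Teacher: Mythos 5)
Your proposal is correct and follows essentially the same route as the paper: part (1) via the observation that geodesic labels are irreducible together with uniqueness of normal forms under convergence, and part (2) by ruling out even length with the two-halves argument and then using the isometric-embedding condition to show that the reduction applied to $x_1\cdots x_{n+1}$ cannot act on a proper factor. Your packaging of the last step (all proper factors label geodesic subpaths, hence are irreducible, hence the left-hand side is the whole word) is a slightly tidier rendering of the paper's case analysis on whether the prefix $u$ or suffix $v$ surrounding the redex is empty, but the underlying argument is identical.
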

\begin{proof}
If $u_0,\dots, u_n$ and $v_0,\dots, v_n$ are two geodesics in $\Gamma(G,\Sigma)$ with $u_0=v_0,u_n=v_n$, then the words 
\[u=(u_0^{-1}u_1)\dots(u_{n-1}^{-1}u_n)\in \Sigma^*\ \text{  and  }\  v=(v_0^{-1}v_1)\dots(v_{n-1}^{-1}v_n)\in \Sigma^*\]
are irreducible words representing the same group element.  By Lemma \ref{lem:ConvergentRewriting}, $u=v$,  which establishes the first claim.

If 
$u_0, u_1,\dots, u_{m-1},u_m=u_0$ 
is an \iec\ in $\Gamma$ of length $m>2$,
set $x_i=_Gu_{i-1}^{-1}u_i\in\Sigma$ for $1\leq i\leq  m$.
If  $m=2n$ then 
$u_0,\dots, u_n$ and $u_0, u_{m-1},\dots, u_n$
are two geodesics for the same element, and since the circuit is embedded and $m>2$, so $n>1$, we have $u_1\neq u_{m-1}$,
so $a,b$ are distinct words, which contradicts the first claim.
Thus $m=2n+1$.

Now let $a=x_1\dots x_{n+1}$ and  $b=x_m^{-1}\dots x_{n+2}^{-1} $. Then   $a=_Gb$.
The word $b$ is geodesic since it is a subpath of length $n$ of an IEC of length $2n+1$.
The word $a$ is not geodesic  
so some rewrite rule must apply. We have  $a=u\ell v$ with  $(\ell,r)\in T$,  $|r|<|\ell|$ and $a=_G urv$.
If $|u|+|v| > 0$, then $\ell$ is geodesic since it is a subpath of length at most $n$ of  an IEC of length $2n+1$.
Then $\ell\neq_G r$ for any $r \in\Sigma^*$ with $|r| < |\ell|$. Hence
$u = v = \lambda$ and $a= \ell$. But then $r = b$ because $b$, being geodesic and shorter than $a$ by one letter, is the unique word $r$ with $|r|<|a|$ and $r =_G a$.

\end{proof}

We are now ready to prove the main theorem.

\begin{proof}[Proof of Theorem~\ref{thmA:main}]
Corollary \ref{cor:Plain} gives one direction.   

Suppose that $G$ admits presentation by a finite convergent length-reducing rewriting system $(\Sigma, T)$ such that 
$\Sigma=\Sigma^{-1}$ and the left-hand side of every rule has length at most three.  
Let $\Gamma$ be the undirected Cayley graph of $G$ with respect to $\Sigma$. By Lemma \ref{lem:RewritingAndGamma}, $\Gamma$ is geodetic and \iec s have length at most five. Since $\Gamma$ satisfies the hypotheses of Theorem~\ref{ThmB:Shortcircuits}, all embedded circuits in $\Gamma$ have diameter at most two.  By Theorem \ref{thm:PlainAndTwoBlocks}, $G$ is plain.
\end{proof}

\section*{Acknowledgments}
Research supported by  Australian Research Council grant  DP210100271.
The authors thank the anonymous reviewer for helpful feedback and corrections.

\bibliographystyle{cas-model2-names}
\bibliography{LengthReducingRewritingBib}

\end{document}